\newtheorem{lemma}{Lemma}
\newtheorem{theorem}[lemma]{Theorem}
\newtheorem{proposition}[lemma]{Proposition}
\newtheorem{corollary}[lemma]{Corollary}
\theoremstyle{definition}
\newtheorem{definition}[lemma]{Definition}
\newtheorem{remark}[lemma]{Remark}
\newcounter{step}[section]
\newcommand{\step}{\refstepcounter{step}\paragraph{Step \arabic{step}:}}
\newcommand{\Sphere}{\mathbb{S}^3}
\DeclareMathOperator{\Int}{int}
\newcommand{\nhd}{\mathcal{N}}
\begin{document}
\title{Genus and fibredness of certain three-bridge links and certain satellite knots}
\author{Jessica E. Banks}
\date{}
\maketitle
\begin{abstract} 
For each three-bridge link of a certain form, we construct a taut Seifert surface for the link and establish whether the link is fibred. Using this, we also give the genus and fibredness of satellite knots whose pattern is constructed from a two-component two-bridge link in the case not addressed by work of Hirasawa and Murasugi.
\end{abstract}


\section{Introduction}

Two-bridge knots and links (also known as rational links) have been extensively studied (see, for example, \cite{MR2028021}, \cite{MR778125}, \cite{MR2218754}, \cite{MR1807953}, \cite{MR2948663}, \cite{MR2141874}). In particular, \cite{MR808776} Proposition 12.24 gives the genus of any two-bridge link and a necessary and sufficient condition for the link to be fibred. Three-bridge links have also been studied (see, for example, \cite{MR779063}, \cite{MR3007934}, \cite{MR2493373}, \cite{MR2809049}, \cite{MR1714299}), but to a far lesser degree. In this paper, we consider the family of three-bridge links given by doubling one component of a two-component two-bridge link with a particular orientation. Using sutured manifold techniques developed by Gabai (see, for example, \cite{MR723813}, \cite{MR870705}, \cite{MR823442}), we give an explicit construction of a taut Seifert surface for each of these links and calculate when each link is fibred.

In \cite{zbMATH05545482} it is shown that three-bridge links of the form just described can be used to understand (in particular) certain satellite knots that are constructed using two-bridge links. For most such satellite knots, Hirasawa and Murasugi, in \cite{zbMATH05820036}, calculate the genus of the knot and whether it is fibred. There is one case that is not covered by their analysis. By combining Proposition \ref{tautnessproofprop} and Corollary \ref{whenfibredcor} with Theorems 1 and 2 of \cite{zbMATH05545482}, we answer these questions in the remaining case. We include a discussion of how these results can be understood in terms of sutured manifolds.

A \textit{Seifert surface} for a link $K\subseteq \Sphere$ is an oriented surface $R_K$, with no closed components, embedded in $\Sphere$ such that the oriented boundary of $R_K$ is $K$. We say that $R_K$ is \textit{taut} if it has maximal Euler characteristic among Seifert surfaces for $K$. The \textit{genus} of a knot is the genus of a taut Seifert surface.
A link is \textit{fibred} if its exterior is a fibre bundle over $\mathbb{S}^1$ with each fibre a (taut) Seifert surface.

In Section \ref{algorithmsection} we give an algorithm to construct a Seifert surface for each of the three-bridge links we are considering.
In Section \ref{decompositionssection} we recall various definitions and results regarding sutured manifolds we will need, and consider some specific sutured manifold decompositions.  
In Section \ref{tautnesssection}, we use sutured manifold decompositions to show that each of the constructed Seifert surfaces is taut, and in Section \ref{fibrednesssection} we further use these calculations to establish which of the links are fibred. As the proofs involve checking a number of similar cases, we postpone some of the details to the appendix.
The discussion of satellite knots appears in Section \ref{satellitesection}.

\section{An algorithm for constructing surfaces}\label{algorithmsection}

Take a four-string braid of the form $\sigma_2^{a_1}\sigma_3^{-a_2}\sigma_2^{a_3}\sigma_3^{-a_4}\cdots\sigma_3^{-a_{N-1}}\sigma_2^{a_N}$, for $N$ odd.
At the top and at the bottom of the braid, add two arcs, one joining the first and second strings, one joining the third and fourth strings. This gives (an alternating diagram of) a two-bridge knot or link $L_2$ with continued fraction $[a_1,a_2,a_3,\ldots,a_N]$. The corresponding fraction is
\[
\cfrac{1}{a_1+\cfrac{1}{a_2+\cfrac{1}{\stackrel{\ddots}{a_{N-1}+\cfrac{1}{a_N}.}}}}
\]
 Conversely, every two-bridge knot or link has a diagram of this form where either $a_i> 0$ for all $i$ or $a_i< 0$ for all $i$ (\cite{MR808776} Proposition 12.13). We will work from these diagrams, and assume that $a_i> 0$ for all $i$. For convenience, we will draw the diagram with the braid instead running from left to right, as shown in Figure \ref{algorithmpic1}. The boxes each represent a line of crossings, the direction of which is given by the crossing shown in the box.
\begin{figure}[htbp]
\centering
\input{picturefiles/algorithmpic1.tex}
\caption{\label{algorithmpic1}}
\end{figure}

For this paper we are interested only in two-component two-bridge links. In this case, one component is made up of the first string of the braid together with the arc that is the second string at both the top and the bottom  of the braid (and the two pieces of arc joining these two strings). This means that the diagram of this component is an embedded loop in the plane. Call this component $L_{\mathbb{A}}$, and the other component $L_{\mathbb{D}}$. We will double $L_{\mathbb{A}}$, taking a parallel copy of it with some framing. The orientation on the new component is chosen to be the opposite of that on $L_{\mathbb{A}}$, so that the two parallel components of the new link bound an annulus in the complement of the third link component; call this annulus ${\mathbb{A}}$. An example is shown in Figure \ref{algorithmpic2} (in this case with the framing given by the diagram). Call the new link $L_3$.  The class of links we are interested in is those given by this construction. Note that each is a three-bridge link --- with three components it cannot have a bridge number below $3$, and we have drawn a diagram demonstrating that the bridge number is at most three.
Denote by $L_+$ the component of $\partial {\mathbb{A}}$ that is oriented from right to left along the first string of the original braid (and therefore runs from left to right through the main section of the braid), and denote the other component of $\partial {\mathbb{A}}$ by $L_-$. We will continue to call the third component $L_{\mathbb{D}}$.
Note also that $L_3$ is not split unless $L_2$ is split (in which case $L_2$ is an unlink, and $L_3$ is either an unlink or an unknot together with a $(2,2m)$ torus link), since any separating sphere would necessarily separate $L_{\mathbb{D}}$ from at least one of $L_+$ and $L_-$.
\begin{figure}[htbp]
\centering
\input{picturefiles/algorithmpic2.tex}
\caption{\label{algorithmpic2}}
\end{figure}

In general $L_{\mathbb{D}}$ is not embedded in the given diagram. However, it appears as a two-string braid (with an arc joining the strings at the top and at the bottom). It therefore bounds a disc ${\mathbb{D}}$ that is (nearly always) divided into two monogons connected by a line of bigons (as in Figure \ref{algorithmpic3}). Note that the directions of the crossings in the diagram of $L_{\mathbb{D}}$ are not necessarily all the same. In fact, the direction of each crossing is determined by the position of $L_{\mathbb{A}}$ at the corresponding point of the braid.
\begin{figure}[htbp]
\centering
\input{picturefiles/algorithmpic3.tex}
\caption{\label{algorithmpic3}}
\end{figure}

For notational purposes, take a point of $L_{\mathbb{A}}$ on the first string of the braid and isotope it to infinity in the diagram plane. This gives a picture of $L_2$ that we can see as the disc ${\mathbb{D}}$ with $L_{\mathbb{A}}$ as an infinite arc running from left to right that winds up and down across ${\mathbb{D}}$ in the process (see Figure \ref{algorithmpic4} for an example). 
We will divide up this picture into vertical strips, with each piece drawn from a fixed list. The arrangement of these pieces will help determine a Seifert surface for $L_3$. 
\begin{figure}[htbp]
\centering
\input{picturefiles/algorithmpic4.tex}
\caption{\label{algorithmpic4}}
\end{figure}

\step
Each section of $L_{\mathbb{A}}$ that runs over the disc ${\mathbb{D}}$ takes one of the four forms shown in Figure \ref{algorithmpic5}a. Label each of these with a letter $\mathcal{A}$, $\mathcal{B}$, $\mathcal{C}$ or $\mathcal{D}$ respectively. 
\begin{figure}[htb]
\centering
(a)
\input{picturefiles/algorithmpic5a.tex}\\\mbox{} \\
(b)
\input{picturefiles/algorithmpic5b.tex}
(c)
\input{picturefiles/algorithmpic5c.tex}
(d)
\input{picturefiles/algorithmpic5d.tex}
\caption{\label{algorithmpic5}}
\end{figure}
Between each consecutive pair of these is one of the blocks shown in Figure \ref{algorithmpic5}b. Again the box represents a non-negative number of crossings in the direction indicated (here, unlike previously, we allow this number to be $0$). Label each of these blocks by either an $\mathcal{O}$ or an $\mathcal{E}$ according to whether the number of crossings it represents is odd or even. Reading from left to right, this gives a word in the alphabet $\{\mathcal{A},\mathcal{B},\mathcal{C},\mathcal{D},\mathcal{O},\mathcal{E}\}$. 
An example of this subdivision is shown in Figure \ref{algorithmpic6}; the resulting word is $\mathcal{A}\mathcal{E}\mathcal{C}\mathcal{E}\mathcal{B}\mathcal{O}\mathcal{B}
\mathcal{E}\mathcal{D}\mathcal{E}\mathcal{A}\mathcal{O}\mathcal{C}\mathcal{O}
\mathcal{D}\mathcal{O}\mathcal{A}$.
\begin{figure}[htbp]
\centering
\input{picturefiles/algorithmpic6.tex}
\caption{\label{algorithmpic6}}
\end{figure}
At the ends of the picture are the blocks shown in Figure \ref{algorithmpic5}c. We will actually treat these two pieces as a single block as in Figure \ref{algorithmpic5}d, treating the whole picture as circular rather than linear. Assign this final block either an $\mathcal{O}$ or an $\mathcal{E}$ so that the total number of instances of $\mathcal{O}$ is even. For example, given the word $\mathcal{A}\mathcal{E}\mathcal{C}\mathcal{E}\mathcal{B}\mathcal{O}\mathcal{B}
\mathcal{E}\mathcal{D}\mathcal{E}\mathcal{A}\mathcal{O}\mathcal{C}\mathcal{O}
\mathcal{D}\mathcal{O}\mathcal{A}$, we assign an $\mathcal{E}$ to this block. Placing this letter at the start/end of the word we already had results in a circular word. Call this word $\mathcal{W}$. We will call the letter corresponding to the ends of $\mathbb{D}$ the \textit{infinity letter} (because it corresponds to a tangle with slope $\infty$, whereas each other $\mathcal{O}$ or $\mathcal{E}$ corresponds to a tangle with integer slope). 

Which of the blocks from Figure \ref{algorithmpic5}a occur (or, whether $L_{\mathbb{A}}$ lies above or below the disc ${\mathbb{D}}$) will not play an essential role in the proofs in this paper; this variation simply increases the number of cases to be considered. For this reason, we will assume from now on that only the $\mathcal{A}$ block in Figure \ref{algorithmpic5}a occurs. The details for the other cases can be found in the appendix. Figure \ref{algorithmpic7} shows a link of this form that otherwise follows the pattern of the link in Figure \ref{algorithmpic6}.
\begin{figure}[htbp]
\centering
\input{picturefiles/algorithmpic7.tex}
\caption{\label{algorithmpic7}}
\end{figure}

\step
The next step is to place either a $+$ or a $-$ between each pair of letters in $\mathcal{W}$. 
First place a $+$ to the right side of the infinity letter, and then follow the following rules.
\begin{itemize}
\item An $\mathcal{A}$ (or $\mathcal{B}$, $\mathcal{C}$, $\mathcal{D}$) should have the same symbol ($+$ or $-$) on either side.
\item An $\mathcal{E}$ should have the same symbol on either side.
\item An $\mathcal{O}$ should have different symbols on the two sides.
\end{itemize}
For example, the link in Figure \ref{algorithmpic7} gives us
\[
\mathcal{E}+\mathcal{A}+\mathcal{E}+\mathcal{A}+\mathcal{E}+\mathcal{A}+\mathcal{O}-\mathcal{A}-
\mathcal{E}-\mathcal{A}-\mathcal{E}-\mathcal{A}-\mathcal{O}+\mathcal{A}+\mathcal{O}-
\mathcal{A}-\mathcal{O}+\mathcal{A}+.
\]

The significance of these signs is that they tell us about the orientation of the disc $\mathbb{D}$ at the relevant points in the sequence. Without loss of generality, we may assume that the leftmost monogon of $\mathbb{D}$ (which makes up the right-hand part of the block from Figure \ref{algorithmpic5}d) is oriented pointing upwards, corresponding to a $+$ symbol. Each letter $\mathcal{E}$ corresponds to an even number of half-twists in the disc $\mathbb{D}$, so (looking at Figure \ref{algorithmpic5}b) the orientation of $\mathbb{D}$ on the left-hand side of the block matches that on the right-hand side. On the other hand, an $\mathcal{O}$ corresponds to an odd number of half-twists, so the orientations of $\mathbb{D}$ on the left-hand and right-hand sides of the corresponding block are different.

\step\label{pairingstep} 
We will pair up many of the $\mathcal{A}$ letters in $\mathcal{W}$. To explain the reasoning behind this pairing, we will first study some smaller examples.

First consider the example shown in Figure \ref{algorithmpic8}a. Here $\mathbb{D}$ is split into two monogons, one oriented upwards and the other downwards. Since $L_{\mathbb{A}}$ passes through each of these discs from above to below, tubing together the two monogons (as shown in Figure \ref{algorithmpic8}b) results in an orientable surface $\mathbb{D}'$ (an annulus) disjoint from $L_{\mathbb{A}}$. Figure \ref{algorithmpic8}c gives an alternative picture of this. We may now double $L_{\mathbb{A}}$ and insert the annulus ${\mathbb{A}}$ in the complement of $\mathbb{D}'$, giving a (disconnected) Seifert surface for $L_3$ as shown in Figure \ref{algorithmpic8}d. 
\begin{figure}[htbp]
\centering
(a)
\input{picturefiles/algorithmpic8a.tex}
(b)
\input{picturefiles/algorithmpic8b.tex}\\
(c)
\input{picturefiles/algorithmpic8c.tex}
(d)
\input{picturefiles/algorithmpic8d.tex}
\caption{\label{algorithmpic8}}
\end{figure}

Now take the example shown in Figure \ref{algorithmpic9}a. The presence of a letter $\mathcal{O}$ (in the middle of the word) again allows us to tube together two points on the disc $\mathbb{D}$ to reduce the intersection of $\mathbb{D}$ and $L_{\mathbb{A}}$ and still have an orientable surface. 
Any time there is a letter $\mathcal{O}$ in $\mathcal{W}$ we can tube together the pieces of disc coming from the two copies of the block $\mathcal{A}$ on either side. However, this can only be done once with each $\mathcal{A}$; if the letter $\mathcal{O}$ occurs twice in a row (that is, with no $\mathcal{O}$ or $\mathcal{E}$ between them), we can only apply this trick for one of these two letters.
As the infinity letter is also an $\mathcal{O}$, here we could tube together the two monogons to remove the remaining intersections between the surface and $L_{\mathbb{A}}$. In this sense the infinity letter behaves the same way as the other $\mathcal{O}$ in $\mathcal{W}$.  For illustrative purposes, however, we will ignore this option. Instead we note that the two remaining copies of $\mathcal{A}$ (the outer two) appear as one $+\mathcal{A} +$ and one $-\mathcal{A} -$. This means we can tube these two pieces of disc together through the middle of the previous tube to again give an orientable surface (as shown in Figure \ref{algorithmpic9}b). 
\begin{figure}[htbp]
\centering
(a)
\input{picturefiles/algorithmpic9a.tex}\\
(b)
\input{picturefiles/algorithmpic9b.tex}
\caption{\label{algorithmpic9}}
\end{figure}

This second example demonstrates the aim of our pairing. The change in sign means the orientability of the surface is preserved. The fact that we have already tubed together the two $\mathcal{A}$ blocks in between means there is space to add the second tube. In general we wish to tube together the pieces of $\mathbb{D}$ corresponding to as many instances of the letter $\mathcal{A}$ as possible, while ensuring that we end up with an orientable surface. 
Therefore, we are looking to pair up the $\mathcal{A}$ letters in $\mathcal{W}$ obeying the following rules.
\begin{itemize}
\item Each letter $\mathcal{A}$ is paired with at most one other.
\item Each pair consists of one $+\mathcal{A} +$ and one $-\mathcal{A} -$.
\item No pairing interleaves with another pairing.
\item Given two copies of the letter $\mathcal{A}$ that are paired, these two letters divide the circular word $\mathcal{W}$ into two pieces. We require that in at least one of these pieces every letter $\mathcal{A}$ is paired. We call this piece the \textit{joining word} of the pair.
\item There are as many pairings as possible.
\end{itemize}
Let $N_+$ be the number of instances of $+\mathcal{A} +$ in $\mathcal{W}$, and $N_-$ the number of instances of $-\mathcal{A} -$. Without loss of generality we will assume that $N_+\geq N_-$.
From the first two conditions it is clear that we cannot pair up every letter $\mathcal{A}$ if $N_+> N_-$.

\begin{lemma}\label{pairinglemma}
It is possible to choose $N_-$ pairs meeting the above requirements. 
\end{lemma}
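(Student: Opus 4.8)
The plan is to prove the statement by a greedy matching of cyclically adjacent occurrences of $\mathcal{A}$, with the induction on $N_+ + N_-$ (equivalently on $N_-$). The only data that matter are the cyclic positions of the letters $\mathcal{A}$ and the sign on each side of them; recall that, by the rules of Step \ref{pairingstep}, each $\mathcal{A}$ carries the same sign on both sides, so every occurrence is unambiguously of type $+\mathcal{A}+$ or $-\mathcal{A}-$. Since each pair must consist of one $+\mathcal{A}+$ and one $-\mathcal{A}-$, any admissible collection of pairs has at most $\min(N_+,N_-)=N_-$ members, so it suffices to exhibit a valid collection of exactly $N_-$ pairs; this automatically realises the maximum demanded by the last rule.

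First I would observe that whenever $\mathcal{W}$ contains at least one $+\mathcal{A}+$ and at least one $-\mathcal{A}-$, there are two occurrences of $\mathcal{A}$ that are cyclically adjacent (no further occurrence of $\mathcal{A}$ on one of the two arcs between them) and carry opposite signs: reading the signs of the $\mathcal{A}$'s around the circle, they cannot all agree, so a sign change occurs somewhere. I pair such an adjacent opposite pair. The arc between them that contains no other $\mathcal{A}$ serves as the \emph{joining word}, and the joining-word condition holds vacuously since this arc contains no $\mathcal{A}$ at all. I then delete these two letters from $\mathcal{W}$ and repeat. Each deletion decreases both $N_+$ and $N_-$ by one, so the hypothesis $N_+\ge N_-$ is preserved and the process continues until no $-\mathcal{A}-$ remains, producing exactly $N_-$ pairs.

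It remains to check that the pairs produced jointly satisfy the global conditions, and this is where care is needed. The key claim, proved by induction alongside the construction, is that each chord drawn cuts off an arc whose only occurrences of $\mathcal{A}$ have already been paired at earlier stages. Indeed, when we pair two letters adjacent \emph{among the letters still present}, the short arc between them contains, in the original word $\mathcal{W}$, only letters $\mathcal{A}$ deleted (hence paired) at previous steps. This yields the two outstanding conditions at once: no pairing interleaves with another (the non-crossing property, since each chord is innermost at the moment it is drawn and every later chord lies outside the arc it bounds), and the arc cut off is a legitimate joining word for the later pairs too, because every $\mathcal{A}$ inside it is already paired.

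The hard part will be precisely this bookkeeping: verifying that the joining-word requirement and the non-interleaving requirement are compatible and are both inherited through the recursion. Both follow from the single observation that greedily removing cyclically adjacent opposite-sign pairs produces a laminar (nested) family of chords in which each cut-off arc is filled entirely by previously matched letters. Once this is established, the remaining conditions — one $+\mathcal{A}+$ and one $-\mathcal{A}-$ per pair, at most one partner per letter, and maximality — are immediate from the construction and the count $N_-$.
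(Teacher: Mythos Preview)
Your proof is correct and takes a genuinely different route from the paper's. The paper exploits the $\mathcal{O}/\mathcal{E}$ structure of $\mathcal{W}$: since sign changes are governed by the intervening $\mathcal{O}$s, it locates an explicit subword (either $\mathcal{A}\pm\mathcal{O}\mp\mathcal{A}\mp\mathcal{O}\pm$ when two $\mathcal{O}$s occur consecutively, or $\mathcal{E}+\mathcal{A}+\mathcal{O}-\mathcal{A}-\mathcal{E}$ otherwise), pairs the two $\mathcal{A}$s in it, and then deletes or contracts that subword so that the shorter word $\mathcal{W}'$ retains the alternating $\{\mathcal{A},\ldots\}/\{\mathcal{O},\mathcal{E}\}$ form before invoking the inductive hypothesis. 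You instead discard the $\mathcal{O}/\mathcal{E}$ letters entirely and work only with the cyclic sequence of signs on the $\mathcal{A}$s; the existence of an adjacent opposite-sign pair is then immediate from a sign change, and the innermost-chord argument (a later chord has both endpoints outside the connected short arc $J_k$, so $J_k$ lies entirely on one side of it) gives non-crossing and the joining-word condition simultaneously. Your approach is the more elementary one and applies to any cyclic $\pm$ sequence without reference to the ambient word; the paper's approach has the minor bookkeeping advantage that the reduced word $\mathcal{W}'$ stays in the same alternating form, though that plays no role in the lemma itself. One small point of phrasing: ``every later chord lies outside the arc it bounds'' is the right idea but would be clearer as ``every later chord has both endpoints outside $J_k$, hence $J_k$ lies on one side of it''; this is exactly what your argument needs and what it proves.
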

\begin{proof}
We proceed by induction in $N_-$. Clearly the result holds if $N_-=0$.
Suppose instead that $N_->0$.
Then $\mathcal{W}$ contains at least one letter $\mathcal{O}$.

First suppose there are two consecutive instances of the letter $\mathcal{O}$. Then there is a subword of $\mathcal{W}$ that is either $\mathcal{A}+\mathcal{O}-\mathcal{A}-\mathcal{O}+$ or $\mathcal{A}-\mathcal{O}+\mathcal{A}+\mathcal{O}-$. Pair these two instance of $\mathcal{A}$. Next let $\mathcal{W}'$ be the word given by deleting this string from $\mathcal{W}$. Inductively, there is a pairing for $\mathcal{W}'$ that includes each $-\mathcal{A}-$ and meets the above rules. Taking the same pairings on the letters in $\mathcal{W}$ completes the pairing process for $\mathcal{W}$.

Suppose instead that any two instances of the letter $\mathcal{O}$ in $\mathcal{W}$ have a letter $\mathcal{E}$ between them in each direction (recall that $\mathcal{W}$ is circular). As $\mathcal{W}$ contains the letter $\mathcal{O}$ at least twice, there is a subword of $\mathcal{W}$ that is $\mathcal{E}+\mathcal{A}+\mathcal{O}-\mathcal{A}-\mathcal{E}.$ Pair these two instances of the letter $\mathcal{A}$, and let $\mathcal{W}'$ be the word given by replacing this string in $\mathcal{W}$ with the single letter $\mathcal{O}$. Again, choosing a pairing on the instances of $\mathcal{A}$ in $\mathcal{W}'$ completes the pairing process on $\mathcal{W}$.
\end{proof}

Returning to our example in Figure \ref{algorithmpic7}, two possible pairings are
\[
\mathcal{E}+\mathcal{A}+\mathcal{E}+
\underbracket{\mathcal{A}+\mathcal{E}+\underbracket{\mathcal{A}+
\mathcal{O}-\mathcal{A}}-
\mathcal{E}-\mathcal{A}}-\mathcal{E}-
\underbracket{\mathcal{A}-\mathcal{O}+
\mathcal{A}}+\mathcal{O}-
\underbracket{\mathcal{A}-\mathcal{O}+\mathcal{A}}+
\]
and
\[
\mathcal{E}+\underbracket{\mathcal{A}+\mathcal{E}+
\underbracket{\mathcal{A}+\mathcal{E}+\mathcal{A}+
\mathcal{O}-\mathcal{A}}-
\mathcal{E}-\mathcal{A}}-\mathcal{E}-
\underbracket{\mathcal{A}-\mathcal{O}+
\underbracket{\mathcal{A}+\mathcal{O}-
\mathcal{A}}-\mathcal{O}+\mathcal{A}}+.
\]

\step\label{tubingstep}
Given a pairing as in the previous step, we can now define a Seifert surface $R$ for $L_3$.
Start with the disc $\mathbb{D}$. There is a partial order on the pairs of letters given by inclusion of the joining words.
In our two example pairings above, in the first case only two of the four joining words are comparable, whereas in the second case the words are totally ordered.
Tube together the pieces of $\mathbb{D}$ corresponding to the paired letters in turn, respecting this partial order. That is, first add tubes for pairs of the letter $\mathcal{A}$ that are close together, and then move on to those that are further apart. In each case the tube should follow $L_{\mathbb{A}}$ in the direction of the joining word for the pair. Thus each tube will pass through the tubes corresponding to joining words that are lower in the partial order.
The result of this will be an orientable, once-punctured surface $\mathbb{D}'$ of genus $N_-$ with boundary equal to $L_{\mathbb{D}}$.

If $N_+=N_-$ then $\mathbb{D}'$ is disjoint from $L_{\mathbb{A}}$. We may therefore double $L_{\mathbb{A}}$ and add in the annulus $\mathbb{A}$ in the complement of $\mathbb{D}'$, giving a disconnected Seifert surface $R=\mathbb{D}'\cup \mathbb{A}$ for $L_3$.

If $N_+>N_-$ then there will be $N_+-N_-$ instances of the letter $\mathcal{A}$ in $\mathcal{W}$ that are not paired. Each of these will appear in $\mathcal{W}$ as $+\mathcal{A} +$.
Again, double $L_{\mathbb{A}}$ to give $L_+$ and $L_-$, and add in the annulus $\mathbb{A}$. This time $\mathbb{A}$ will intersect $\mathbb{D}'$ in $N_+$ arcs, one in each unpaired $\mathcal{A}$ block.
Position $\mathbb{A}$ so that the number of crossings between $L_+$ and $L_-$ is minimal, and all such crossings occur  underneath the part of $\mathbb{D}$ corresponding to the infinity letter. That is, all these crossings should occur long the piece of $L_{\mathbb{A}}$ in the single block from Figure \ref{algorithmpic5}d.
By `turning over' $\mathbb{A}$ if needed, further ensure that, away from the infinity letter, $\mathbb{A}$ is oriented downwards (that is, $L_+$ appears above $L_-$ in the diagram).
Then each block corresponding to an unpaired $\mathcal{A}$ appears as in Figure \ref{algorithmpic10}a. We replace each of these with the piece of surface shown in Figure \ref{algorithmpic10}b. This results in an embedded, connected, orientable Seifert surface $R$ for $L_3$. Note that the orientations are important in Figure \ref{algorithmpic10}b, and therefore in this construction the orientability of $R$ is reliant on the fact that each unpaired $\mathcal{A}$ appears as $+\mathcal{A} +$. If we had instead found that $N_+<N_-$ we would have needed to position $\mathbb{A}$ the other way up.
\begin{figure}[htbp]
\centering
(a)
\input{picturefiles/algorithmpic10a.tex}
(b)
\input{picturefiles/algorithmpic10b.tex}
\caption{\label{algorithmpic10}}
\end{figure}

\begin{remark}\label{genusremark}
The Euler characteristic of $R$ is
\[\chi(R)=1+0-2N_--2(N_+-N_-)=1-2N_+.\]
As $L_3$ has three components, this means $R$ has genus $N_+-1$ if $N_-<N_+$ (when $R$ is connected), or genus $N_+$ if $N_-=N_+$ (when $R$ has two components, one of which is the annulus $\mathbb{A}$).
\end{remark}

\begin{remark}
There is one situation not covered by the above constructions --- when the word $\mathcal{W}$ is the empty word. Since the length of $\mathcal{W}$ is proportional to the number of blocks from Figure \ref{algorithmpic5}a that appear, this can only occur when $L_{\mathbb{A}}$ is disjoint from the disc $\mathbb{D}$ in the diagram of $L_2$. In this case, $L_2$ is the two-component unlink and $L_3$ bounds either three discs or a disc and an annulus. We will ignore this situation in this paper.
\end{remark}

\section{Sutured manifolds}\label{decompositionssection}

\begin{definition}
A \textit{sutured manifold} is a compact $3$--manifold $M$ with a division of $\partial M$ into three oriented subsurfaces $T$, $R_+$, and $R_-$ such that:
\begin{itemize}
\item $T\cup R_+\cup R_-=\partial M$;
\item $T$ is a collection of tori;
\item $T\cap(R_+\cup R_-)=\emptyset$;
\item $s=R_+\cap R_-$ is a finite collection of disjoint simple closed curves in $\partial M$;
\item each component of $s$ inherits the same orientation from $R_+$ as from $R_-$.
\end{itemize}
The curves in $s$ are called the \textit{sutures}.
\end{definition}

 We will retain this notation throughout. Note that the orientation of $\partial M$ changes on crossing $s$. We assume that $R_+$ is oriented pointing out of $M$, and $R_-$ is oriented pointing in.
In this paper we will only consider the case $T=\emptyset$. If we are not concerned with which of $R_+$ and $R_-$ is which, it suffices to give a suitable set $s$ of (unoriented) sutures on $\partial M$ in order to make $M$ into a sutured manifold. We will denote the sutured manifold by either $(M,R_+,R_-)$ or $(M,s)$.

\begin{definition}
A \textit{product sutured manifold} is a sutured manifold $(M,s)$ such that there exists a surface $S$ and a homeomorphism $\psi\colon (M,s)\to (S\times[0,1],\partial S\times\{\frac{1}{2}\})$.
\end{definition}

\begin{remark}
If $(M,s)$ is a connected product sutured manifold then $R_+$ and $R_-$ are both connected.
\end{remark}

\begin{definition}
Let $S$ be a compact surface. If $S$ is connected then the \textit{Thurston norm} of $S$ is $\chi_-(S)=\min(0,-\chi(S))$, where $\chi(S)$ is the Euler characteristic of $S$. If $S$ is disconnected, the \textit{Thurston norm} of $S$ is given by summing the Thurston norms of its connected components.

In other words, the Thurston norm is the absolute value of the Euler characteristic after all sphere and disc components have been discarded.
\end{definition}

\begin{definition}
Let $(M,s)$ be a sutured manifold, and let $S$ be a surface either properly embedded in $M$ or contained in $\partial M$. Suppose that $\partial S=s$. Then $S$ is said to be \textit{taut} if $\chi_-(S)$ is minimal among all representatives of $[S,\partial S]\in H_2(M,s)$.

A sutured manifold $(M,s)$ is \textit{taut} if $M$ is irreducible and both $R_+$ and $R_-$ are taut.
\end{definition}

\begin{remark}
Any product sutured manifold is taut. In particular, a ball with a single suture is taut.
\end{remark}

\begin{definition}
Let $(M,R_+,R_-)=(M,s)$ be a sutured manifold.
A \textit{product disc} is a disc $S$ properly embedded in $M$ with $|\partial S\cap s|=2$.
A \textit{product annulus} is an annulus properly embedded in $M$ with one boundary component contained within $R_+$ and the other contained within $R_-$.
\end{definition}

\begin{definition}
Let $S\subseteq M$ be an orientable surface properly embedded in a sutured manifold $(M,R_+,R_-)$, with $\partial S$ transverse to $s$. Performing a \textit{sutured manifold decomposition} along $S$ gives a new sutured manifold $(M',R'_+,R'_-)$ as follows.
This is denoted by $(M,R_+,R_-)\rightsquigarrow(M',R'_+,R'_-)$.

Choose an orientation on $S$, and choose a product neighbourhood $S\times[0,1]$ of $S$ in $M$ such that $S$ is oriented towards $S\times\{1\}$.
Set $M'=M\setminus S\times(0,1)$. In addition, set $R'_+=(R_+\cap M')\cup (S\times\{1\})$ and $R'_-=(R_-\cap M')\cup (S\times\{0\})$.
\end{definition}

This definition means that in general there is a choice to be made, when decomposing, about the orientation of the decomposing surface $S$. For a product disc decomposition this choice has no effect up to isotopy of the sutures in $\partial M$.

\begin{remark}\label{twostepremark}
We could instead see a sutured manifold decomposition as a two step process: first alter the sutures according to set rules, given $S$, to make them disjoint from $\partial S$, then delete (the interior of) a product neighbourhood of $S$ (chosen small enough to be disjoint from the sutures). We will make use of this viewpoint in proving Lemmas \ref{removepairedloopslemma} and \ref{removetwistslemma}.
\end{remark}

The following result will allow us to test whether various sutured manifolds are taut. If we find a suitable sequence of sutured manifold decompositions that ends at a sutured manifold we already know to be taut, it tells us that the original one is too, and moreover that in fact every sutured manifold in the sequence is taut. We will use this idea without explicit mention.

\begin{proposition}[\cite{MR723813} Lemma 3.5 (see also \cite{MR992331} Theorem 3.6)]\label{tautnessprop}
Let $(M,s)$ be a sutured manifold, and let $S$ be a connected, orientable surface properly embedded in $M$ with $\partial S$ transverse to the sutures.
Suppose that no component of $\partial S$ disjoint from $s$ bounds a disc in either $R_+$ or $R_-$, and that if $S$ is a disc then $\partial S$ intersects $s$.
Let $(M',s')$ be the result of a sutured manifold decomposition along $S$.
If $(M',s')$ is taut then so is $(M,s)$.
\end{proposition}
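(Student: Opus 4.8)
The plan is to verify directly the three conditions that constitute tautness of $(M,s)$: that $M$ is irreducible, and that each of $R_+$ and $R_-$ is norm-minimising in its class in $H_2(M,s)$. Throughout I would use that $(M',s')$ is taut, so that $M'$ is irreducible and $R'_+,R'_-$ are taut, together with the structural fact that $M$ is recovered from $M'$ by regluing the two copies $S\times\{0\}$ and $S\times\{1\}$ of $S$, and that $R'_\pm$ is built from $R_\pm$ by cutting along $\partial S\cap R_\pm$ and attaching a parallel copy of $S$. Since $T=\emptyset$ we have $\partial R_\pm=s$, so a competing surface for $R_+$ is exactly an orientable, properly embedded $Q$ with $\partial Q=s$ and $[Q]=[R_+]$ in $H_2(M,s)$.

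First I would treat the norm-minimality of $R_+$, the argument for $R_-$ being identical. Given a competitor $Q$ as above, I would isotope it to meet $S$ transversely and then perform the oriented cut-and-paste (double-curve sum) of $Q$ with a parallel copy of $S$, pushing the result off $S$ into $M'$; this yields a surface $Q'$ representing the class $[R'_+]$ in $H_2(M',s')$. The crucial bookkeeping is that summing with $S$ changes the Thurston norm by an amount $C$ that depends only on $S$ and on the pattern $\partial S\cap R_+$, hence by the \emph{same} constant whether we sum $Q$ or sum $R_+$: one checks $\chi_-(Q')\le\chi_-(Q)+C$ (with the inequality coming from discarding trivial components) while the natural construction gives $\chi_-(R'_+)=\chi_-(R_+)+C$ exactly. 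Since $R'_+$ is taut in $(M',s')$ we have $\chi_-(R'_+)\le\chi_-(Q')$, so $\chi_-(R_+)+C=\chi_-(R'_+)\le\chi_-(Q')\le\chi_-(Q)+C$, and the constant cancels to give $\chi_-(R_+)\le\chi_-(Q)$, as required.

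For irreducibility of $M$ I would first argue that $S$ is incompressible: a compressing disc would, after surgery, lower $\chi_-(S)$ while fixing the classes $[R'_\pm]$, contradicting the tautness of $R'_\pm$; here the hypothesis that $S$ is not a disc with $\partial S$ disjoint from $s$, and that no $s$-disjoint boundary circle bounds a disc in $R_+$ or $R_-$, is exactly what rules out the degenerate surfaces on which this reasoning breaks down. Granting incompressibility, any essential $2$-sphere in $M$ can be made disjoint from $S$ by removing intersection circles with innermost-disc arguments, after which it lies in $M'$ and contradicts the irreducibility of $M'$; hence $M$ is irreducible, completing the three conditions.

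The hard part will be the Euler-characteristic bookkeeping for the double-curve sum: one must ensure that discarding the sphere and disc components produced by the cut-and-paste does not covertly lower the norm, and that $Q'$ genuinely has boundary $s'$ and the correct relative class, so that the constant $C$ matches on the nose. This is precisely where the two hypotheses on $\partial S$ and on $S$ being a disc enter, since they forbid the trivial pieces that would otherwise spoil the matching. A secondary subtlety is extracting the incompressibility of $S$ cleanly from tautness rather than assuming it. For a fully rigorous treatment I would package this bookkeeping using the norm-additivity formalism of \cite{MR992331}.
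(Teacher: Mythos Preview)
The paper does not prove this proposition at all: it is quoted from Gabai \cite{MR723813} (with a pointer to Scharlemann \cite{MR992331}) and used as a black box throughout Sections~\ref{tautnesssection} and~\ref{fibrednesssection}. So there is no ``paper's own proof'' to compare against.

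That said, your sketch is a fair outline of the standard argument as found in those references. The one point I would flag is the equality $\chi_-(R'_+)=\chi_-(R_+)+C$: this is where the hypotheses on $\partial S$ are actually used, to ensure that when you form $R'_+$ from $R_+$ by cutting and gluing on a copy of $S$, no new disc or sphere components are created (which would make $\chi_-$ drop below $-\chi$). You gesture at this but locate the hypotheses in the wrong place --- you invoke them for the competitor $Q$ and for incompressibility of $S$, whereas the delicate direction is showing the constant $C$ is achieved \emph{exactly} by $R_+$. If a boundary circle of $S$ disjoint from $s$ bounded a disc in $R_+$, or if $S$ were a disc with $\partial S\cap s=\emptyset$, the gluing could split off a sphere or disc and the equality would fail. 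Tightening this step (and the homology bookkeeping that $Q'$ really represents $[R'_+]$) would make the argument complete; for a clean treatment see Scharlemann's version in \cite{MR992331}.
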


The next three results of Gabai help to detect whether or not a sutured manifold is a product sutured manifold. We will use them in Section \ref{fibrednesssection} to show when the Seifert surfaces we constructed in Section \ref{algorithmsection} are fibre surfaces.

\begin{proposition}[\cite{MR870705} Lemmas 2.2 and 2.5]\label{productprop}
Let $(M,s)$ be a sutured manifold, and let $S$ be either a product disc or a product annulus in $M$.
Let $(M',s')$ be the result of a sutured manifold decomposition along $S$. Then $(M',s')$ is a product sutured manifold if and only if $(M,s)$ is.
\end{proposition}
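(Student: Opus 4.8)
The plan is to treat the two types of decomposing surface with parallel arguments, and to prove each biconditional by passing freely between $(M,s)$ and $(M',s')$ through the vertical ($I$-fibred) structure of a product sutured manifold. Throughout I would write a product sutured manifold as $(F\times I,\partial F\times\{\tfrac12\})$, so that $R_+=F\times\{1\}\cup(\partial F\times(\tfrac12,1])$ and $R_-=F\times\{0\}\cup(\partial F\times[0,\tfrac12))$, and I would use that, by the remark following the definition, such manifolds are taut and in particular irreducible. I reserve the letter $S$ for the decomposing surface, as in the statement.

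For the forward implication I would first straighten $S$. Assuming $(M,s)=(F\times I,\dots)$, put $S$ in general position with respect to the $I$-fibration. Using irreducibility of $F\times I$ together with incompressibility of the fibres, a standard innermost-disc and outermost-arc argument removes all closed intersections and all inessential arcs of $S$ against the vertical fibres, after which $S$ is isotopic (rel $S\cap s$) to a vertical surface $\gamma\times I$: here $\gamma$ is a properly embedded arc in $F$ when $S$ is a product disc, and a properly embedded arc or an essential simple closed curve in $F$ when $S$ is a product annulus. In the annulus case one must also dispose of a compressible or boundary-parallel $S$, which cobounds a product region and for which the conclusion is immediate. Once $S$ is vertical, the decomposition literally cuts the base, giving $\bigl((F\,|\,\gamma)\times I,\ \partial(F\,|\,\gamma)\times\{\tfrac12\}\bigr)$; this is a product sutured manifold (the definition does not require connectedness, so no pieces need be discarded).

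For the converse I would reconstruct $(M,s)$ from $(M',s')$ by reversing the decomposition, in the two-step spirit of Remark \ref{twostepremark}: $M=M'\cup(S\times[0,1])$, where the collar $S\times[0,1]$ is reattached along the two copies $S\times\{0\}\subseteq R'_-$ and $S\times\{1\}\subseteq R'_+$ that the decomposition created in $\partial M'$. Since $(M',s')=(F'\times I,\dots)$ is a product and $S\times[0,1]$ is itself a product (over a disc or over an annulus), the task is to see that gluing a product collar onto a product along these two subsurfaces of $\partial(F'\times I)$ again yields a trivial $I$-bundle. Identifying $S\times\{0\}$ and $S\times\{1\}$ with subsurfaces $G_-\subseteq R'_-$ and $G_+\subseteq R'_+$ and tracking the sutures $s'$ shows the re-gluing is compatible with the $I$-coordinate: the arcs or circles of $\partial G_\pm$ lying on $s'$ are exactly where the decomposition introduced new sutures, so the product structures of $F'\times I$ and of $S\times[0,1]$ agree along the gluing and assemble into a product structure on $M$, over the surface obtained from $F'$ by identifying $G_-$ with $G_+$.

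The main obstacle is the annulus case of the converse, where one must rule out that the re-gluing introduces a twist or closes up into a non-product (a thickened torus or a twisted $I$-bundle) instead of a trivial $I$-bundle. The safeguard is precisely the suture bookkeeping: a product annulus is disjoint from $s$ and has one boundary circle in each of $R_+$ and $R_-$, so after decomposition the two new suture circles record which boundary circles of the reattached annuli must be matched, and this forces the collar to be glued respecting the $I$-fibres. Checking carefully that this matching leaves no orientation-reversing or twisting ambiguity, and that the quotient base is genuinely a surface, is the delicate point; once it is verified, both implications for both surface types follow, recovering \cite{MR870705} Lemmas 2.2 and 2.5.
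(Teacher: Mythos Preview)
The paper does not prove this proposition at all: it is quoted, with attribution, as Lemmas~2.2 and~2.5 of Gabai~\cite{MR870705}, and is used as a black box in Sections~\ref{fibrednesssection} and the appendix. So there is no ``paper's own proof'' to compare your attempt against.

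That said, your sketch follows the standard line of argument that one finds in Gabai's original paper. A couple of small points are worth tightening. In the forward direction for a product annulus, your phrase ``a properly embedded arc or an essential simple closed curve in $F$'' is not quite right: if $\gamma$ were an arc then $\gamma\times I$ would meet the sutures $\partial F\times\{\tfrac12\}$ in two points and hence be a product disc, not a product annulus; so only the closed-curve option occurs. In the converse direction your final paragraph correctly identifies the only genuine content, namely that the re-gluing respects the $I$-fibration and does not produce a twisted $I$-bundle or a thickened torus; your explanation via the suture bookkeeping is the right mechanism, but as written it is an outline rather than a verification, and a referee would likely ask you to make the orientation check explicit.
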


\begin{proposition}[\cite{MR870705} Lemma 2.4]\label{fibrednessprop}
If $(M, s)$ is a product sutured manifold and $(M, s)\rightsquigarrow (M', s')$ is a sutured manifold decomposition such that $(M', s')$ is taut, then $(M', s')$ is a product sutured manifold.
\end{proposition}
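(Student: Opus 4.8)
The plan is to normalise the decomposing surface with respect to the product structure on $(M,s)$ and then reduce, wherever possible, to Proposition~\ref{productprop}. Fix a product structure $(M,s)\cong(\Sigma\times[0,1],\partial\Sigma\times\{\tfrac12\})$, so that $R_+=\Sigma\times\{1\}$ and $R_-=\Sigma\times\{0\}$, and let $S$ denote the surface along which the decomposition is performed.

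First I would use the tautness of $(M',s')$ to arrange that $S$ is essential. Tautness makes $M'$ irreducible and both $R'_+$ and $R'_-$ norm-minimising in $H_2(M',s')$; a norm-minimising surface can admit no compressing or $\partial$-compressing disc, since compressing would produce a representative of strictly smaller $\chi_-$ in the same relative homology class. By the decomposition rule, $R'_+$ contains the copy $S\times\{1\}$ and $R'_-$ contains $S\times\{0\}$, so a compression of $S$ in $M$ would descend to a compression of $R'_+$ or $R'_-$ in $M'$. Hence $S$ may be taken incompressible and $\partial$-incompressible, with boundary-parallel pieces excluded by the standing hypotheses of Proposition~\ref{tautnessprop} (in particular that no component of $\partial S$ disjoint from $s$ bounds a disc in $R_\pm$).

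The geometric heart is the classical normal form for essential surfaces in a trivial $I$-bundle: after an ambient isotopy, $S$ is a disjoint union of \emph{horizontal} pieces, each isotopic to $\Sigma\times\{\tfrac12\}$, and \emph{vertical} pieces of the form $c\times[0,1]$ with $c\subseteq\Sigma$ a union of properly embedded arcs and simple closed curves. I would decompose one component at a time, which is legitimate because Proposition~\ref{tautnessprop} forces every intermediate sutured manifold between $(M,s)$ and the taut $(M',s')$ to be taut as well. A vertical piece $c\times[0,1]$ is precisely a \emph{product disc} when $c$ is an arc and a \emph{product annulus} when $c$ is a circle, so decomposing along it preserves being a product sutured manifold by Proposition~\ref{productprop}. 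A horizontal piece $\Sigma\times\{\tfrac12\}$ splits the product into $\Sigma\times[0,\tfrac12]$ and $\Sigma\times[\tfrac12,1]$; unwinding $R'_+=(R_+\cap M')\cup(S\times\{1\})$ and $R'_-=(R_-\cap M')\cup(S\times\{0\})$ shows each factor is again $\Sigma\times[0,1]$ with its core suture, so the outcome is a (possibly disconnected) product sutured manifold. Running through all the pieces of $S$ then yields that $(M',s')$ is a product, as required.

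The step I expect to be the main obstacle is the normalisation itself: replacing an arbitrary oriented decomposing surface by an essential one in horizontal/vertical position \emph{without altering the output} $(M',s')$, and verifying that the isotopy is compatible with the oriented, sutured decomposition — in particular that it respects the intersection pattern $\partial S\cap s$ and the chosen transverse orientation of $S$. Once essentiality has been extracted from the tautness of $(M',s')$, the remaining case analysis is routine, the vertical case being an immediate appeal to Proposition~\ref{productprop} and the horizontal case a direct inspection of the resulting sutures.
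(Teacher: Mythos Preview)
The paper does not prove this proposition at all: it is quoted verbatim as Lemma~2.4 of Gabai's paper \cite{MR870705} and used as a black box. So there is no ``paper's own proof'' to compare against.

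Your sketch is essentially the standard argument (and indeed the one Gabai gives). The outline --- use tautness of $(M',s')$ to force $S$ to be incompressible and $\partial$-incompressible, invoke the Waldhausen-type classification of essential surfaces in $\Sigma\times I$ as horizontal or vertical, then handle vertical pieces via Proposition~\ref{productprop} and horizontal pieces by direct inspection --- is correct in spirit. You have also correctly identified where the work lies: the normalisation step, and in particular showing that $S$ is already essential (so that only an ambient isotopy is needed, and $(M',s')$ is unchanged). Your argument that a compressing disc for $S$ would yield a compressing disc for $R'_\pm$ is the right idea, though it needs a little care: the disc $D$ lives in $M$, and after cutting along $S$ it becomes a disc in $M'$ with boundary on $R'_+$ or $R'_-$; tautness then forces $\partial D$ to bound a disc in $R'_\pm$, hence in $S$. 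The $\partial$-incompressibility argument is similar but fiddlier, as the arc of $\partial D$ on $\partial M$ may cross sutures. One small point: your ``decompose one component at a time'' reduction does not actually need the intermediate manifolds to be taut --- what you use is that each step (along a product disc, product annulus, or horizontal copy of $\Sigma$) preserves being a product, which you establish directly.
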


\begin{proposition}[\cite{MR870705} Lemma 2.7]\label{notfibredprop}
Let $(M,s)$ be a sutured manifold, and let $S$ be a connected, orientable surface properly embedded in $M$ with $\partial S$ transverse to $s$. Suppose that $S$ is neither a product disc nor a product annulus.
Let $(M_+,s_+)$ and $(M_-,s_-)$ be the results of sutured manifold decompositions along $S$ with opposite orientations on $S$. Suppose that $(M_+,s_+)$ and $(M_-,s_-)$ are both taut. Then $(M,s)$ is not a product sutured manifold.
\end{proposition}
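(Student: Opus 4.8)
The plan is to argue by contradiction: assume that $(M,s)$ \emph{is} a product sutured manifold and deduce that $S$ must after all be a product disc or a product annulus, contrary to hypothesis. The first step is essentially free. Since $(M,s)$ is assumed to be a product and each of the two decompositions $(M,s)\rightsquigarrow(M_\pm,s_\pm)$ produces a taut sutured manifold, Proposition \ref{fibrednessprop} applies in both cases and shows that $(M_+,s_+)$ and $(M_-,s_-)$ are themselves product sutured manifolds. Thus the entire content is to prove the following: if decomposing a product along a connected surface $S$ yields a product for \emph{both} choices of transverse orientation on $S$, then $S$ is a product disc or a product annulus.

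The second step is to set up the comparison between the two decompositions. The underlying manifold $M'=M\setminus S\times(0,1)$ is the same for both orientations; what differs is only which of the two parallel copies $S_a,S_b$ of $S$ on $\partial M'$ is adjoined to $R_+$ and which to $R_-$, together with the induced modification of the sutures (so $s_+\neq s_-$ in general). Writing $M\cong F\times[0,1]$, a cut-and-paste Euler characteristic count gives $\chi(R^+_+)=\chi(R^+_-)=\chi(F)+\chi(S)$, and reversing the orientation of $S$ gives the same value; so the Euler characteristics are symmetric under reversal and yield no contradiction on their own. The obstruction must instead come from the product \emph{structures}: one has $(M',s_+)\cong R^+_+\times[0,1]$ and $(M',s_-)\cong R^-_+\times[0,1]$ simultaneously, with the copy $S_b$ lying in the bottom $R^+_-$ of the first product but in the top $R^-_+$ of the second.

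The third step, which I expect to be the crux, is to convert this structural incompatibility into a classification of $S$. Working in $M\cong F\times[0,1]$ with height function $h\colon M\to[0,1]$, I would isotope $S$ into normal form and analyse its critical points. The demand that \emph{both} orientation decompositions be products is very rigid: each product structure on $(M',s_\pm)$ provides an $I$--bundle structure in which the adjoined copy of $S$ is a horizontal boundary leaf, and the two structures place the \emph{same} copy of $S$ at opposite ends of the $I$--factor. Using this, together with innermost-disc and outermost-arc reductions, I would show that $S$ can be isotoped to a vertical surface of the form $\alpha\times[0,1]$ for a $1$--manifold $\alpha$ properly embedded in $F$; here the transversality of $\partial S$ with $s$ is exactly what rules out the degenerate horizontal configuration (in which $\partial S$ would run along the sutures). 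Since $S$ is connected, $\alpha$ must then be a single arc or a single simple closed curve, so that $S$ is a product disc or a product annulus, contradicting the hypothesis and completing the proof.

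The main difficulty is the normal-form and isotopy argument of the third step: one must organise the reductions so that the two competing product structures on $M'$ can be compared directly, and verify that no genuinely non-vertical connected $S$ can survive while keeping both decompositions products. The Euler characteristic bookkeeping of the second step, although routine, is what ensures that these reductions do not alter $\chi(S)$ in a way that would leave room for a more complicated surface, and so keeps the classification honest.
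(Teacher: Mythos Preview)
The paper does not prove this proposition at all: it is quoted as Lemma~2.7 of Gabai's paper \cite{MR870705} and used as a black box. So there is no ``paper's own proof'' to compare against.

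As for your proposal on its own merits: steps one and two are fine, and indeed the reduction via Proposition~\ref{fibrednessprop} to the claim ``if both orientation decompositions of a product along $S$ are products, then $S$ is vertical'' is exactly the right framing. The gap is step three. You have stated what you hope to prove and gestured at innermost-disc and outermost-arc arguments, but you have not actually explained how the coexistence of the two product structures on $M'$ forces $S$ to be isotopic to $\alpha\times[0,1]$. Having two $I$--bundle structures on the same compact $3$--manifold with different horizontal boundaries is not by itself contradictory, and the standard disc/arc reductions you mention operate on intersections of $S$ with some fixed auxiliary surface, which you have not specified. Gabai's original argument is short but uses the Thurston norm: in a product $F\times I$ the norm is computed by horizontal surfaces, and the tautness of both $(M_\pm,s_\pm)$ forces an equality in a norm inequality that can only hold when $S$ is norm-trivial and vertical. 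If you want to avoid the norm machinery, you would need to make the isotopy argument in your third step precise; as written it is a plan rather than a proof.
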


In this paper we will follow the techniques for practical sutured manifold decomposition suggested and developed by Gabai (see for example \cite{MR823442}). In particular, as far as possible we will work with the complements of sutured manifolds that are `lying flat on the plane', meaning we can draw a nice picture, at least locally. Moreover, the decompositions used will mostly be along surfaces that are the `complementary regions' of this picture. The following lemmas are designed to help simplify such decompositions.

\begin{lemma}\label{removepairedloopslemma}
Let $(M,s)$ be a sutured manifold. Suppose there is an annulus $A$ in $\partial M$ such that $s\cap A$ consists of two simple closed curves that are core curves of $A$. Let $S$ be a surface properly embedded in $M$ such that $\partial S\cap A$ is a simple arc that is essential in $A$ and meets the two curves of $s\cap A$ each once.

Let $s'=s\setminus(s\cap A)$.
Then, up to isotopy of the sutures, sutured manifold decompositions of $(M,s)$ and $(M,s')$ along $S$ give the same result.
\end{lemma}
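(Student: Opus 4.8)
The plan is to exploit the two-step description of a sutured manifold decomposition from Remark \ref{twostepremark}: first modify the sutures (in the way determined by $S$) so that they become disjoint from $\partial S$, and then delete the interior of a product neighbourhood of $S$. The second step is insensitive to the sutures, so decomposing $(M,s)$ and $(M,s')$ produces the same underlying manifold $M'=M\setminus S\times(0,1)$ and the two outcomes can differ only in their sutures. Since $s$ and $s'$ agree away from $A$ and $\partial S$ is unchanged, the decompositions also agree away from a neighbourhood of the arc $\partial S\cap A$, so the whole comparison is local to $A$.

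First I would fix the local model in $A$. The two curves $c_1,c_2$ cut $A$ into three bands $B_0,B_1,B_2$, with $B_1$ the middle band. As $s\cap A=\{c_1,c_2\}$, the interior of $B_1$ contains no sutures, so $B_1$ is a single annular region of $\partial M$ with $\partial B_1=c_1\cup c_2$; because the orientation of $\partial M$ changes on crossing $s$, the band $B_1$ lies in the opposite one of $R_+,R_-$ from $B_0$ and $B_2$, and I may assume $B_1\subseteq R_-$ while $B_0,B_2\subseteq R_+$. The first thing to verify is that $(M,s')$ is genuinely a sutured manifold: deleting $c_1,c_2$ merges $B_1$ into the adjacent $R_+$ regions, and since $B_1$ is bounded only by the deleted curves its orientation may be reversed consistently, so that $A\subseteq R_+$ in $(M,s')$ while the rest of the structure is untouched.

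Next I would carry out both decompositions explicitly along the essential arc $a=\partial S\cap A$, which crosses $B_0,c_1,B_1,c_2,B_2$ in turn. Cutting $A$ along $a$ turns it into a rectangle $\tilde A$ and turns $c_1,c_2$ into two spanning arcs $\tilde c_1,\tilde c_2$; deleting the product neighbourhood glues the copies $S\times\{1\}\subseteq R'_+$ and $S\times\{0\}\subseteq R'_-$ along the two long edges of $\tilde A$. Reading off where $R'_+$ meets $R'_-$ along these edges, the decomposition of $(M,s)$ produces a single suture that runs up one edge, crosses $\tilde A$ along $\tilde c_1$, runs along the opposite edge, crosses back along $\tilde c_2$, and then continues up the first edge; the decomposition of $(M,s')$, for which $A$ is monochrome, produces instead a single suture running straight up that same edge. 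The two arcs coincide outside the middle band, and on the middle band they together bound exactly the disc $\tilde B_1$ obtained by cutting $B_1$ along $a$.

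Finally I would observe that $\tilde B_1$ is a disc whose interior lies in a single region (equal to $R'_-$ in the $s$-picture and to $R'_+$ in the $s'$-picture) and contains no other sutures, so the suture may be isotoped across $\tilde B_1$, carrying the decomposition of $(M,s)$ exactly onto that of $(M,s')$; this gives the claimed agreement up to isotopy of the sutures. I expect the only delicate point to be the orientation bookkeeping in this last computation, namely tracking the co-orientation of $S$ together with the orientations of $R_+$ and $R_-$ to confirm that the two suture patterns really cobound an embedded disc contained in one of $R'_\pm$ (rather than a region across which no valid isotopy exists), and hence are genuinely isotopic.
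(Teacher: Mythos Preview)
Your argument is correct and is essentially the same as the paper's: both carry out the local computation of Remark~\ref{twostepremark} near the arc $\partial S\cap A$ and observe that the two resulting suture patterns agree up to isotopy --- the paper does this by drawing Figure~\ref{decompositionspic1}, while you spell out the same picture in words. One small remark: your closing worry about needing the bounding disc $\tilde B_1$ to lie in a single $R'_\pm$ region ``rather than a region across which no valid isotopy exists'' is unnecessary --- any isotopy of a simple closed curve in $\partial M'$ is a valid isotopy of sutures, and since $\tilde B_1$ contains no other sutures the isotopy across it is automatic.
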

\begin{proof}
\begin{figure}[htbp]
\centering
\input{picturefiles/decompositionspic1.tex}
\caption{\label{decompositionspic1}}
\end{figure}
Figure \ref{decompositionspic1} shows the effect of altering the sutures in the two cases (see Remark \ref{twostepremark}). Although there are two possibilities for the result of this process, depending on the choice of orientation of $S$, we have only shown one here; the other picture is symmetric. The embedding into $\Sphere$ used in Figure \ref{decompositionspic1} has been chosen to imitate how we will use Lemma \ref{removepairedloopslemma} but is not significant for this proof.
\end{proof}

\begin{corollary}\label{toruscor}
Let $(M,s)$ be a sutured manifold such that $M$ is a solid torus and all the sutures are parallel and longitudinal. Then $(M,s)$ is taut.
\end{corollary}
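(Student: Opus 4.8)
The plan is to produce a single sutured manifold decomposition of $(M,s)$ that terminates at a ball carrying one suture, which we already know to be taut, and then to invoke Proposition \ref{tautnessprop}.

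First I would record the combinatorics of the sutures. Since each suture inherits a consistent orientation from $R_+$ and from $R_-$, and the sutures are parallel longitudes, the annular regions of $\partial M$ cut out by $s$ must alternate between $R_+$ and $R_-$; hence there are an even number $2n$ (with $n\geq 1$) of parallel longitudinal sutures, dividing $\partial M$ into $2n$ longitudinal annuli that alternate between $R_+$ and $R_-$.

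Next I would decompose along a meridian disc $D$ of the solid torus. Its boundary is a meridian, which is transverse to $s$ and meets each longitudinal suture exactly once, so $\partial D$ meets $s$ in $2n\geq 2$ points. In particular $D$ is a disc whose boundary intersects $s$, and no component of $\partial D$ is disjoint from $s$, so the hypotheses of Proposition \ref{tautnessprop} are satisfied. Cutting the solid torus along $D$ yields a ball $B$. I then have to identify the sutures on $\partial B$: the lateral boundary $\partial D\times[0,1]$ is the torus $\partial M$ cut open along the meridian, on which the $2n$ longitudes become $2n$ spanning arcs that divide the lateral annulus into $2n$ rectangles alternating between $R_+$ and $R_-$, while the two new disc faces $D\times\{1\}$ and $D\times\{0\}$ lie in $R'_+$ and $R'_-$ respectively.

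The key computation is to check that the resulting suture on $\partial B$ is a single circle. For this I would show that $R'_+$ (the top disc $D\times\{1\}$ together with the $n$ rectangles coming from $R_+$, each attached to the top disc along a single boundary arc as a disjoint ``flap'') is again a disc, and likewise that $R'_-$ is a disc; two discs meeting along their common boundary give $\partial B\cong \mathbb{S}^2$ cut by a single suture. Hence $(B,s')$ is a ball with one suture, which is a product sutured manifold and therefore taut by the remark following the definition of tautness. Applying Proposition \ref{tautnessprop} then yields that $(M,s)$ is taut, with irreducibility handled automatically by the conclusion of that proposition. The main point requiring care is precisely this verification that $R'_+$ and $R'_-$ are discs for every $n$ --- that is, that the alternating rectangles attach along disjoint boundary arcs of the top and bottom discs --- so that the suture on $\partial B$ remains connected no matter how many sutures $(M,s)$ started with.
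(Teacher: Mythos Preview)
Your proof is correct and follows essentially the same strategy as the paper: decompose along a meridian disc, verify that the result is a ball with a single suture, and invoke Proposition~\ref{tautnessprop}. The only difference is in how the single-suture claim is verified. You compute $R'_+$ and $R'_-$ directly and check that each is a disc. The paper instead appeals to Lemma~\ref{removepairedloopslemma}: since $\partial D$ crosses each suture once, one can strip off the $2n$ sutures in pairs without changing the result of the decomposition, reducing to the case of no sutures, where cutting the solid torus along $D$ manifestly yields a ball with one suture. The paper's route is slightly slicker because it reuses a lemma already in hand and avoids the flap-counting, but your direct argument is self-contained and equally valid.
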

\begin{proof}
Let $S$ be a $\partial$--compression disc in $M$ such that $\partial S$ intersects each component of $s$ exactly once.
Note that because $(M,s)$ is a sutured manifold, the number of sutures is necessarily even. 
We may repeatedly apply Lemma \ref{removepairedloopslemma} to reduce to the case that there are in fact no sutures. Decomposing along $S$ then gives a ball with a single suture, which is taut.
\end{proof}

\begin{remark}\label{testfibredremark}
Given a sutured manifold $(M,s)$, suppose that we find a sequence of sutured manifold decompositions of the form described in Proposition \ref{tautnessprop} that ends at a solid torus with longitudinal sutures. Since Lemma \ref{toruscor} tells us that this final sutured manifold is taut, Proposition \ref{tautnessprop} shows that every sutured manifold in the sequence (and in particular $(M,s)$) is also taut. This means we can apply Proposition \ref{fibrednessprop}. From this we see that if $(M,s)$ is a product sutured manifold then so is every other manifold in the sequence. To show that $(M,s)$ is not a product manifold, therefore, it is sufficient to find one of the manifolds in the sequence that is not. In particular, if either $R_+$ or $R_-$ is disconnected in any manifold in the sequence then $(M,s)$ is not a product sutured manifold. We will use this method in Section \ref{fibrednesssection} to prove that most of the three-bridge links we are considering are not fibred.
\end{remark}

\begin{lemma}\label{removetwistslemma}
Let $(M,s)$ be a sutured manifold. Suppose there is an annulus $A$ in $\partial M$ such that $s\cap A$ consists of two simple arcs that are essential in $A$. Let $S$ be a surface properly embedded in $M$ such that $\partial S\cap A$ is a simple arc that is essential in $A$ and meets the two arcs of $s\cap A$ minimally and in two points each.

Let $s'$ be a new set of sutures given by altering $s\cap A$ by a single Dehn twist around the core of $A$ so that each arc meets $\partial S\cap A$ only once.
Then, up to isotopy of the sutures, sutured manifold decompositions of $(M,s)$ and $(M,s')$ along $S$ (with the same orientation) give the same result.
\end{lemma}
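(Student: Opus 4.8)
The plan is to proceed exactly as in Lemma~\ref{removepairedloopslemma}, using the two-step description of a sutured manifold decomposition from Remark~\ref{twostepremark}: first alter the sutures, according to the fixed rules, so that they become disjoint from $\partial S$, and then delete a product neighbourhood of $S$. The second step depends only on $S$, not on the sutures, so it is literally identical for $(M,s)$ and $(M,s')$. Moreover $s$ and $s'$ agree outside $A$, so all of the work takes place inside $A$. Since an essential arc in an annulus is a spanning arc joining the two boundary circles, $s\cap A$ is a pair of spanning arcs and $\alpha:=\partial S\cap A$ is a spanning arc meeting each of them twice (in the $(M,s)$ picture) or, after the prescribed single Dehn twist about the core of $A$, once each (in the $(M,s')$ picture). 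I would record both configurations in a single figure analogous to Figure~\ref{decompositionspic1}.

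First I would carry out the suture alteration in each case; the rule replaces $s\cap A$ by a $1$--manifold disjoint from $\alpha$, obtained by reconnecting the suture and $\alpha$ at their intersection points in the orientation-compatible way. Cutting $A$ along the spanning arc $\alpha$ then opens $A$ into a disc $D$, and the altered sutures sit inside $D$ as a properly embedded tangle whose endpoints lie on the images of the two boundary circles of $A$. These endpoints are fixed, since a Dehn twist about the core fixes $\partial A$ pointwise and the alteration only modifies the sutures near their intersections with $\alpha$, away from $\partial A$. The heart of the argument is the observation that cutting along the spanning arc $\alpha$ \emph{unwinds} the annulus: the core curve $c$ of $A$ meets $\alpha$ exactly once and is therefore severed by the decomposition, so the single Dehn twist about $c$ that relates $s$ and $s'$ becomes isotopic to the identity once we pass to $D$. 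Since a tangle in a disc with fixed boundary endpoints is determined up to isotopy by its (necessarily planar) endpoint pairing, it then suffices to check that the two cases produce the same pairing of these fixed endpoints, which would give the asserted agreement up to isotopy of the sutures.

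The main obstacle, and the part I would verify by hand on the figure, is the orientation bookkeeping. The reconnection defining the suture alteration depends on the induced orientations of $s$ and of $\partial S$, and I would confirm that at the two intersection points of $\alpha$ with a given arc of $s\cap A$ the alteration joins up the suture pieces in precisely the pattern that, after unwinding, reproduces the single-intersection pattern of the $(M,s')$ picture; equivalently, that the prescribed direction and amount of the twist is exactly the one compensating the change from two intersection points to one, and that no spurious closed suture components are created in either case. As in Lemma~\ref{removepairedloopslemma} there are two possibilities according to the chosen orientation of $S$, but these are symmetric, so it suffices to draw one of them. Beyond this orientation check the argument is a routine comparison of two explicit pictures.
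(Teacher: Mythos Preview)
Your approach is essentially the same as the paper's: both reduce the lemma to a local pictorial check inside the annulus, using the two-step description of decomposition from Remark~\ref{twostepremark}. The paper's entire proof is in fact just the reference to Figure~\ref{decompositionspic2}, so your write-up is if anything more expansive, adding the conceptual remark that cutting along the spanning arc $\alpha$ unwinds the Dehn twist. That heuristic is sound but, as you yourself note, the suture-alteration step is orientation-sensitive and does not obviously commute with the twist, so the endpoint-pairing check really must be carried out explicitly; your ``unwinding'' observation does not replace it.

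There is one genuine inaccuracy. You write that the two possibilities coming from the orientation of $S$ ``are symmetric, so it suffices to draw one of them,'' by analogy with Lemma~\ref{removepairedloopslemma}. The paper explicitly says the opposite: in this lemma the two orientation choices give \emph{different} (non-symmetric) resulting suture patterns, which is why Figure~\ref{decompositionspic2} shows both. The statement of the lemma already reflects this, with its parenthetical ``(with the same orientation)'': for each fixed orientation of $S$, the decompositions of $(M,s)$ and $(M,s')$ agree, but swapping the orientation changes the answer. So your proof would need to verify the two orientation cases separately rather than appealing to a symmetry that is not present. This is easily repaired, but as written the claim is wrong.
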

\begin{proof}
\begin{figure}[htbp]
\centering
\input{picturefiles/decompositionspic2.tex}
\caption{\label{decompositionspic2}}
\end{figure}
See Figure \ref{decompositionspic2}. In this case both possibilities (coming from the choice of orientation) are shown as the results are different.
\end{proof}

\begin{corollary}\label{decomposeblockscor}
Let $(M,s)$ be a sutured manifold. Suppose that part of $(M,s)$ is as shown in Figure \ref{decompositionspic3}a, where the box represents $n\geq 0$ twists in the direction shown and the $k$ denotes $k\geq 0$ parallel copies of the labelled curve.
That is, a section of $(M,s)$ is made up of one copy of Figure \ref{decompositionspic4}a, $n$ copies of Figure \ref{decompositionspic4}b, $k$ copies of Figure \ref{decompositionspic4}c and one copy of Figure \ref{decompositionspic4}d.
\begin{figure}[htbp]
\centering
(a)
\input{picturefiles/decompositionspic3a.tex}
(b)
\input{picturefiles/decompositionspic3b.tex}
(c)
\input{picturefiles/decompositionspic3c.tex}
(d)
\input{picturefiles/decompositionspic3d.tex}
\caption{\label{decompositionspic3}}
\end{figure}
\begin{figure}[htbp]
\centering
(a)
\input{picturefiles/decompositionspic4a.tex}
(b)
\input{picturefiles/decompositionspic4b.tex}
(c)
\input{picturefiles/decompositionspic4c.tex}
(d)
\input{picturefiles/decompositionspic4d.tex}
(e)
\input{picturefiles/decompositionspic4e.tex}
\caption{\label{decompositionspic4}}
\end{figure}

Let $S$ be the visible disc properly embedded in $M$ (see Figure \ref{decompositionspic4}e).
If $n$ is odd then there is a choice of orientation of $S$ such that decomposing along $S$ with that orientation gives the result shown in Figure \ref{decompositionspic3}b.
If $n$ is even then we may orient $S$ such that the decomposition returns Figure \ref{decompositionspic3}c. If, additionally, $n\geq 2$ then taking the opposite orientation on $S$ changes the result to that shown in Figure \ref{decompositionspic3}d.

Note that the manifolds being decomposed are those on the outside of the picture (that is, they are drawn from a viewpoint in the interior of the manifold being decomposed).
\end{corollary}
\begin{proof}
First suppose that $n$ is odd.  As $(M,s)$ is a sutured manifold, $k$ must be even. By Lemma \ref{removepairedloopslemma} we may assume that $k=0$. Moreover, by Lemma \ref{removetwistslemma} we may further assume that either $n=1$ or $n=3$. The decompositions in these two cases are shown in Figure \ref{decompositionspic5}.
\begin{figure}[htbp]
\centering
\input{picturefiles/decompositionspic5.tex}
\caption{\label{decompositionspic5}}
\end{figure}

Now suppose instead that $n$ is even. Then $k$ is odd, so we may assume that $k=1$. Again using Lemma \ref{removetwistslemma}, we can reduce to the cases $n=0$ and $n=2$. The decompositions in these two cases are shown in Figure \ref{decompositionspic6}a.
The decomposition given by taking the opposite orientation on $S$ in the case $n=2$ is shown in Figure \ref{decompositionspic6}b.
\begin{figure}[htb]
\centering
(a)
\input{picturefiles/decompositionspic6a.tex}
(b)
\input{picturefiles/decompositionspic6b.tex}
\caption{\label{decompositionspic6}}
\end{figure}
\end{proof}

We will make repeated use of Corollary \ref{decomposeblockscor}, without explicit mention. Unless stated otherwise, in the case of an even number of twists we will use the decomposition that gives Figure \ref{decompositionspic3}c.

\begin{lemma}\label{wordmanifoldlemma}
Let $w$ be a non-empty circular word in the alphabet $\{a, b, c, d\}\cup\{ A, B, C, D, E, F\}\cup\{\alpha, \beta, \gamma, \delta\}$, obeying the following rules. 
\begin{itemize}
\item A letter $a$ or $b$ must be followed by a letter $A$, $B$ or $C$.
\item A letter $c$ or $d$ must be followed by a letter $D$, $E$ or $F$.
\item A letter $A$, $B$ or $C$ must be followed by a letter $\alpha$ or $\beta$.
\item A letter $D$, $E$ or $F$ must be followed by a letter $\gamma$ or $\delta$.
\item A letter $\alpha$ or $\gamma$ must be followed by a letter $a$ or $c$.
\item A letter $\beta$ or $\delta$ must be followed by a letter $b$ or $d$.
\end{itemize}

Form a $3$--manifold $M$ together with a collection $s$ of simple arcs and closed curves on $\partial M$ by combining the blocks shown in Figures \ref{decompositionspic7}, \ref{decompositionspic8} and \ref{decompositionspic9} in a circle according to the word $w$. Here the $o$ may be any odd number of twists, and the $e$ any even number, in the direction indicated (in blocks $C$ and $F$ the twists may be in either direction, but we may assume they are all in the same direction). The numbers on the markings denote parallel copies of the labelled arc or curve. If $(M,s)$ is a sutured manifold, then it is not taut if and only if $w$ contains only the letters $a$, $c$, $\alpha$, $\gamma$, $B$ and $E$ and every $e$ represents $0$ twists. 
\begin{figure}[bp]
\centering
\input{picturefiles/decompositionspic7.tex}
\caption{\label{decompositionspic7}}
\end{figure}
\begin{figure}[htbp]
\centering
\input{picturefiles/decompositionspic8.tex}
\caption{\label{decompositionspic8}}
\end{figure}
\begin{figure}[htbp]
\centering
\input{picturefiles/decompositionspic9.tex}
\caption{\label{decompositionspic9}}
\end{figure}
\end{lemma}
\begin{proof}
First suppose that $w$ is made up of letters from $\{a,c,\alpha,\gamma,B,E\}$, and every $e$ represents $0$ twists. We can then ignore the letters $B$ and $E$, and construct $(M,s)$ from the blocks corresponding to the word given by deleting these from $w$. Then the simple closed curve made up of the pieces shown in Figure \ref{decompositionspic10} bounds a disc in $M$ but not in $\partial M$. This disc is a compression disc for either $R_+$ or $R_-$, showing that $(M,s)$ is not taut. Now suppose instead that $w$ is not of this form. We must show that $(M,s)$ is taut.
\begin{figure}[htbp]
\centering
\input{picturefiles/decompositionspic10.tex}
\caption{\label{decompositionspic10}}
\end{figure}

We proceed by induction on the length of $w$. We will first consider the inductive step, and afterwards return to the base case.
Assume therefore that $w$ has length at least 6. Choose one letter in $w$ that either is in $\{A,C,D,F\}$ or is in $\{B,E\}$ with the $e$ representing at least two twists.
Find a three-letter subword $w'$ of $w$, not containing the chosen letter, beginning with an $a$, $b$, $c$, or $d$. There are $24$ possible such subwords, and in each case we may locally perform a sutured manifold decomposition. By symmetry we need only check half of these cases (those beginning with an $a$ or $b$). The decompositions in these $12$ cases are shown in Figures \ref{decompositionspic11} and \ref{decompositionspic12}. For each case, there are two possibilities for the letter that precedes $w'$ in $w$, and two possibilities for the following letter. It is easily checked that, for each of the $48$ resulting combinations, the new sutured manifold corresponds to a word meeting the hypotheses of Lemma \ref{wordmanifoldlemma} that is three letters shorter than $w$. Note that this shorter word is not in general the same as the word given by deleting $w'$ from $w$. However, the letters from $\{A,B,C,D,E,F\}$ do not change, so the shorter word again matches our assumptions about the form of $w$. We may therefore apply Lemma \ref{wordmanifoldlemma} to this shorter word.
\begin{figure}[htbp]
\centering
\input{picturefiles/decompositionspic11.tex}
\caption{\label{decompositionspic11}}
\end{figure}
\begin{figure}[htbp]
\centering
\input{picturefiles/decompositionspic12.tex}
\caption{\label{decompositionspic12}}
\end{figure}

Now suppose instead that $w$ has length $3$. By taking the letter $a$, $b$, $c$ or $d$ as the start of $w$, we find that $w$ is one of the $24$ possibilities for $w'$ we previously considered. Thus we may apply the same decompositions as before. It remains to check that `closing up' each of the decomposed manifolds from Figures \ref{decompositionspic11} and \ref{decompositionspic12} (that is, gluing the left-hand edge to the right-hand edge) produces a taut sutured manifold. As before, by symmetry we may focus only on those words beginning with an $a$ or $b$. The rules about the orders of letters in $w$ further reduce the number of cases to consider; $w$ cannot be the word $aA\beta$, for example.
Moreover, $w\neq aA\alpha$ because, although this word obeys the rules, the resulting manifold is never a sutured manifold.
This leaves $5$ cases. If $w\in\{aC\alpha, bA\beta, bB\beta, bC\beta\}$ then the `closed up' manifold is a solid torus with longitudinal sutures (see Figure \ref{decompositionspic13} for the case when $w=aC\alpha$). By Corollary \ref{toruscor}, this is taut.
\begin{figure}[htbp]
\centering
\input{picturefiles/decompositionspic13.tex}
\caption{\label{decompositionspic13}}
\end{figure}

The case of the word $aB\alpha$ has to be treated separately. Taking the piece of sutured manifold in Figure \ref{decompositionspic11} and `closing it up' again results in a solid torus, but the sutures are meridional. This sutured manifold is not taut, since there is a compression disc for each of $R_+$ and $R_-$. 
From our assumptions on the form of $w$, we know that the $e$ represents at least two twists. Hence, we may replace the decomposition in Figure \ref{decompositionspic11} with the alternative decomposition offered by Corollary \ref{decomposeblockscor}. This decomposition is shown in Figure \ref{decompositionspic14}. Again, `closing up' the result leaves us with a solid torus, this time with longitudinal sutures.
\begin{figure}[htbp]
\centering
\input{picturefiles/decompositionspic14.tex}
\caption{\label{decompositionspic14}}
\end{figure}
\end{proof}

\section{Tautness}\label{tautnesssection}

In this section we will show that each of the Seifert surfaces constructed in Section \ref{algorithmsection} is taut. We will do this using sutured manifold decompositions.

\begin{definition}
Given a sutured manifold $(M,s)$ embedded in $\Sphere$, the \textit{complementary sutured manifold} is the sutured manifold $(M',s')$ given by $M'=\Sphere\setminus\Int(M)$ and $s'=s$.

For a link $K$ and a Seifert surface $R_K$ for $K$, we can form a (product) sutured manifold by taking a product neighbourhood of $R_K$ in the exterior of $K$ and adding one suture for each link component. By including the link exterior into $\Sphere$, this sutured manifold has a natural embedding into $\Sphere$. We call the complementary sutured manifold to this one the \textit{complementary sutured manifold to $R_K$}.
\end{definition}

Suppose that $R_K$ is a Seifert surface for a non-split link $K$ and is not taut. Then, by definition, there is another Seifert surface $R_K'$ for $K$ with $\chi(R_K')>\chi(R_K)$. 
There is then a third Seifert surface $R_K''$ for $K$ with $\chi(R_K'')\geq \chi(R_K)$ such that $R_K''$ is disjoint on its interior from $R_K$. This was proved by Scharlemann and Thompson in \cite{MR916076} for knots, and by Kakimizu in \cite{MR1177053} for non-split links. The surface $R_K''$ can be seen as properly embedded in the complementary sutured manifold to $R_K$.
Neither $R_K$ nor $R_K'$ contains any discs or spheres, so $\chi_-(R_K'')<\chi_-(R_K)$. Thus $R_K''$ demonstrates that the complementary sutured manifold to $R_K$ is not taut.

Taking the contrapositive of this, we see that to prove that the Seifert surface $R$ we constructed in Section \ref{algorithmsection} is taut, it is sufficient to prove that the complementary sutured manifold is taut. We do this in the following proposition.

\begin{proposition}\label{tautnessproofprop}
The Seifert surface $R$ for $L_3$ constructed in Section \ref{algorithmsection} is taut.
\end{proposition}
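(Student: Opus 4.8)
The plan is to exploit the reduction established in the paragraphs preceding the statement: by the Scharlemann--Thompson and Kakimizu result, to show $R$ is taut it suffices to show that the complementary sutured manifold $(M,s)$ to $R$ is taut. So the whole proof is really a matter of identifying $(M,s)$ with one of the word manifolds of Lemma \ref{wordmanifoldlemma} and checking that the associated word is not of the degenerate form. First I would describe $(M,s)$ explicitly by removing a product neighbourhood of $R$ from $\Sphere$ and analysing the result strip by strip, using the subdivision of the $\mathbb{D}$-and-$L_{\mathbb{A}}$ picture into vertical strips from Section \ref{algorithmsection}. The claim to establish is that each vertical strip contributes exactly one of the blocks of Figures \ref{decompositionspic7}, \ref{decompositionspic8} and \ref{decompositionspic9}, so that gluing these in a circle according to a suitable word $w$ reconstructs $(M,s)$.

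Next I would read off $w$ from the decorated circular word $\mathcal{W}$, together with the sign assignment and the pairing of Steps \ref{pairingstep} and \ref{tubingstep}. Each $\mathcal{O}$ block becomes an odd twist block ($A$ or $D$), each $\mathcal{E}$ block an even twist block ($B,C,E$ or $F$), and each $\mathcal{A}$ block becomes one Latin connector from $\{a,b,c,d\}$ and one Greek connector from $\{\alpha,\beta,\gamma,\delta\}$, recording both the local orientation of $\mathbb{D}$ given by the $\pm$ signs (which fixes whether adjacent surface pieces lie in $R_+$ or $R_-$) and whether the $\mathcal{A}$ is paired, together with the position of its tube relative to the partial order on joining words. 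The unpaired $\mathcal{A}$ blocks, all appearing as $+\mathcal{A}+$, together with the arcs $\mathbb{A}\cap\mathbb{D}'$ and the local surface of Figure \ref{algorithmpic10}b, account for the remaining connectors and the marking parameters $k,l,m,n,r$; the disconnected case $N_+=N_-$, in which $R=\mathbb{D}'\cup\mathbb{A}$, is handled the same way with no unpaired blocks. I would then verify the six adjacency rules of Lemma \ref{wordmanifoldlemma}: an $\mathcal{A}$ is always flanked by twist blocks, a twist block is always flanked by $\mathcal{A}$ structure, and the sign rules of Step \ref{pairingstep} force the orientations to alternate exactly as the rules require. This step is essentially bookkeeping, and since only the $\mathcal{A}$ block of Figure \ref{algorithmpic5}a is assumed to occur (the other cases being deferred to the appendix), the number of local patterns to be matched is manageable.

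With $(M,s)$ identified as a word manifold, Lemma \ref{wordmanifoldlemma} delivers tautness provided $w$ is not degenerate, that is, provided $w$ does not consist solely of $a,c,\alpha,\gamma,B,E$ with every $e$ representing $0$ twists. Ruling this out is the main obstacle, and I expect it to be where the geometric hypotheses enter. Degeneracy of $w$ forces the absence of any odd block, hence the absence of every genuine $\mathcal{O}$ in $\mathcal{W}$, together with the triviality of every even block; since this is precisely the configuration producing a compression disc for $R_+$ or $R_-$ in the complement, I must show it never arises from a genuine link. I would argue that it can occur only when $L_{\mathbb{A}}$ meets $\mathbb{D}$ inessentially, which by the non-splitness observation and the empty-word remark of Section \ref{algorithmsection} is exactly the excluded case where $L_2$ is split. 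Granting this, $w$ is non-degenerate, Lemma \ref{wordmanifoldlemma} shows $(M,s)$ is taut, and the reduction above gives that $R$ is taut. Throughout, the delicate point is the orientation bookkeeping in passing from $\mathcal{W}$ to $w$: it is precisely the requirement that each paired $\mathcal{A}$ consist of one $+\mathcal{A}+$ and one $-\mathcal{A}-$ that both keeps $R$ orientable and determines the $R_+/R_-$ labelling needed to invoke Lemma \ref{wordmanifoldlemma}.
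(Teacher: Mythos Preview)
Your overall plan --- reduce to tautness of the complementary sutured manifold and then invoke Lemma \ref{wordmanifoldlemma} --- is correct, but there is a genuine gap in the middle step. The complementary sutured manifold $(M_R,s_R)$ to $R$ is \emph{not} directly a word manifold in the sense of Lemma \ref{wordmanifoldlemma}. The paper does not identify $(M_R,s_R)$ strip-by-strip with the blocks of Figures \ref{decompositionspic7}--\ref{decompositionspic9}; instead it first performs a sequence of sutured manifold decompositions on $(M_R,s_R)$ to produce a new manifold $(M_R',s_R')$, and it is only this decomposed manifold that matches the word-manifold pattern. Concretely: each unpaired $\mathcal{A}$ block contributes a region as in Figure \ref{tautnesspic1}a, which contains an extra product disc that must be cut along before the piece becomes the $\alpha a$ block of Figure \ref{tautnesspic1}b; and each paired $\mathcal{A}$ contributes a tube whose interior sits inside $M_R$, which must be removed by decomposing along an annulus running through the tube (Figures \ref{tautnesspic3}--\ref{tautnesspic9}). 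These annulus decompositions are not product decompositions, and the choice of orientation on each annulus is forced by compatibility with the unpaired-$\mathcal{A}$ pieces --- this is what the paper means by ``ultimately these choices are forced on us by what we see in Figure \ref{tautnesspic1}''. Your proposal skips all of this, so the identification you describe cannot be carried out as stated.

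Your non-degeneracy argument is also more delicate than it needs to be, and the reasoning you give (degeneracy forces $L_2$ split) is not obviously correct. The paper's argument is much shorter: the infinity letter of $\mathcal{W}$ always contributes a $C$ block to $w_R$ (with the twist count coming from the crossings between $L_+$ and $L_-$), and since a degenerate word contains only letters from $\{a,c,\alpha,\gamma,B,E\}$, the presence of a single $C$ already rules out degeneracy. You should look for this $C$ rather than trying to exclude all $\mathcal{O}$'s.
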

\begin{remark}
We remind the reader that we have restricted our attention to two-bridge links of a certain form, to reduce the number of cases we must consider. No new ideas are needed to complete the more general proof, and the relevant details can be found in the appendix.
\end{remark}
\begin{proof}
We aim to show that the complementary sutured manifold $(M_R,s_R)$ to $R$ is taut. We will do this by performing sutured manifold decompositions (along surfaces meeting the conditions of Proposition \ref{tautnessprop}) until we reach a sutured manifold to which Lemma \ref{wordmanifoldlemma} can be applied.
These decompositions will take place within the blocks from which $R$ was constructed.
We will locally draw the product sutured manifold given by the section of surface of interest, and then decompose the sutured manifold outside this (following Gabai).

\step\label{decomposeunpairedstep}
On constructing the sutured manifold locally for a piece of surface given by an unpaired $\mathcal{A}$ (see Figure \ref{tautnesspic1}a), we see a product disc in $(M_R,s_R)$. The result of decomposing along it is shown in Figure \ref{tautnesspic1}b. Looking at Lemma \ref{wordmanifoldlemma}, note that this corresponds to the string $\alpha a$ in the notation of Lemma \ref{wordmanifoldlemma}.
\begin{figure}[htbp]
\centering
(a)
\input{picturefiles/tautnesspic1a.tex}\\
(b)
\input{picturefiles/tautnesspic1b.tex}
\caption{\label{tautnesspic1}}
\end{figure}

\step\label{decomposetubesstep}
We now turn our attention to the tubes created from the paired letters.
We will decompose each tube along an annulus on the inside of the tube. One boundary component of the annulus will lie on the tube of interest, and will be disjoint from the sutures. The other will lie on whatever piece of surface is outermost of those running through the middle of the chosen tube. This could be either another tube (in which case the second boundary component will also be disjoint from the sutures) or part of the annulus $\mathbb{A}$ between $L_+$ and $L_-$ (in which case the second boundary component will intersect the sutures twice). These two cases give different results. In the former case, it may be that both boundary components lie in $R_+$, both lie in $R_-$, or one lies in each. The first two of these options are symmetric and so can be considered together, but the third is different. 

Recall that, when performing a sutured manifold decomposition along a surface $S$, a product neighbourhood of $S$ is removed from the manifold. For each of the annuli we wish to decompose along, we can take this product neighbourhood to be the length of the tube the annulus lies inside. Equivalently, we can decompose along annuli at each end of the tube and then discard the piece of sutured manifold inside the tube, provided we orient these two annuli in the same direction.
Seen like this, we can do the decompositions within the blocks from the construction of $R$ at the ends of the tubes, although we must still do them in pairs because of the orientations. 
The orientations on the annuli used in the decompositions are chosen to give results that match Lemma \ref{wordmanifoldlemma}. Ultimately these choices are forced on us by what we see in Figure \ref{tautnesspic1}; choosing the opposite orientation would often result in a sutured manifold that was not taut.

Figure \ref{tautnesspic2} shows sections of the surface $R$ at the ends of a tube that is innermost (that is, one with a maximal joining word).
The central box denotes a section of surface whose form is somewhat unknown. The tube continues through this section, and nothing interesting happens `inside' the tube in this section; this is as much as we need to know to perform the decomposition.
\begin{figure}[htbp]
\centering
\input{picturefiles/tautnesspic2.tex}
\caption{\label{tautnesspic2}}
\end{figure}
The complementary sutured manifold $(M_R,s_R)$, locally around these two blocks, is given in Figure \ref{tautnesspic3}a, drawn in an alternative position to make it easier to see the annuli we wish to decompose along (which are shown in Figure \ref{tautnesspic3}b).
The annuli meet the conditions of Proposition \ref{tautnessprop}, since each boundary component is non-separating in $\partial M_R$.
\begin{figure}[htbp]
\centering
(a)
\input{picturefiles/tautnesspic3a.tex}\\
(b)
\input{picturefiles/tautnesspic3b.tex}
\caption{\label{tautnesspic3}}
\end{figure}
Our choice of orientations on the annuli depends on the relative orientations of $\mathbb{D}$ and $\mathbb{A}$ locally (that is, whether the piece of $\mathbb{D}$ on which the left-hand end\footnote{Here we mean the left-hand end `as seen from the point of view of the tube'. That is, we mean the end from which the tube propagates to the right. If the joining word contains the infinity letter, this end will actually lie to the right of the other end of the tube in the diagram we have drawn.} of the tube lies is oriented upwards or downwards; recall that $\mathbb{A}$ is oriented downwards). Figures \ref{tautnesspic4} and \ref{tautnesspic5} show the choices in these two cases, together with the results of the corresponding sutured manifold decompositions.
\begin{figure}[htbp]
\centering
\input{picturefiles/tautnesspic4.tex}
\caption{\label{tautnesspic4}}
\end{figure}
\begin{figure}[htbp]
\centering
\input{picturefiles/tautnesspic5.tex}
\caption{\label{tautnesspic5}}
\end{figure}
Figures \ref{tautnesspic6}, \ref{tautnesspic7}, \ref{tautnesspic8} and \ref{tautnesspic9} are the analogues to Figures \ref{tautnesspic2}, \ref{tautnesspic3}, \ref{tautnesspic4} and \ref{tautnesspic5} respectively in the case of a tube that is not innermost (that is, one with a joining word that is not maximal), where we see part of another tube rather than part of $\mathbb{A}$. Here the two cases come from whether the orientations of the two tubes agree or disagree.
\begin{figure}[htbp]
\centering
\input{picturefiles/tautnesspic6.tex}
\caption{\label{tautnesspic6}}
\end{figure}
\begin{figure}[htbp]
\centering
(a)
\input{picturefiles/tautnesspic7a.tex}\\
(b)
\input{picturefiles/tautnesspic7b.tex}
\caption{\label{tautnesspic7}}
\end{figure}
\begin{figure}[htbp]
\centering
\input{picturefiles/tautnesspic8.tex}
\caption{\label{tautnesspic8}}
\end{figure}
\begin{figure}[htbp]
\centering
\input{picturefiles/tautnesspic9.tex}
\caption{\label{tautnesspic9}}
\end{figure}

In each of these four cases, the two pieces of sutured manifold we see at the end of the decompositions both correspond to the string $\alpha a$ in the language of Lemma \ref{wordmanifoldlemma} (with $k,l,n\in\{0,1\}$ and $m\in\{0,1,2\}$).

\step\label{wholewordstep}
Let $(M_R',s_R')$ be the sutured manifold that results after performing on $(M_R,s_R)$ all the sutured manifold decompositions given in the first two steps.
We have shown that each letter $\mathcal{A}$ in $\mathcal{W}$ gives rise to a section of $(M_R',s_R')$ that corresponds to the string $\alpha a$ in the language of Lemma \ref{wordmanifoldlemma}. Between any consecutive pair of these sections of $(M_R',s_R')$ is a section coming from a letter $\mathcal{O}$ or $\mathcal{E}$ in $\mathcal{W}$. The infinity letter gives a section corresponding to a letter $C$, with $n=0$ if the infinity letter lies in the joining word of at least one paired $\mathcal{A}$, and $n=1$ if the infinity letter is not contained in any joining word. The number of twists, if $n=1$, will match the number of crossings between $L_+$ and $L_-$.
A letter $\mathcal{O}$ in $\mathcal{W}$ that is not the infinity letter gives a section of $(M_R',s_R')$ corresponding to an $A$, again with $n=0$ if this $\mathcal{O}$ lies in a joining word and $n=1$ otherwise. 
Similarly, an $\mathcal{E}$ that is not the infinity letter gives a section corresponding to a $B$, with $n=0$ if the $\mathcal{E}$ is in a joining word and $n=1$ else.

We conclude, therefore, that $(M_R',s_R')$ satisfies the hypotheses of Lemma \ref{wordmanifoldlemma}. The word $w_R$ describing this manifold contains a letter $C$. Hence Lemma \ref{wordmanifoldlemma} tells us that $(M_R',s_R')$ is taut.
\end{proof}

\section{Fibredness}\label{fibrednesssection}

If $R_K$ is a Seifert surface for a non-split link $K$, then $K$ is a fibred link if and only if the complementary sutured manifold to $R_K$ is a product sutured manifold.

\begin{proposition}\label{discembeddedprop}
Suppose $L_3$ is fibred. Then $\mathcal{W}$ contain no letter $\mathcal{O}$, and every $\mathcal{E}$ (other than the infinity letter) represents no twists. Equivalently, the disc $\mathbb{D}$ bounded by $L_{\mathbb{D}}$ is embedded in the projection plane.
\end{proposition}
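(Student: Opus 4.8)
The plan is to prove the contrapositive: if some $\mathcal{O}$ occurs in $\mathcal{W}$, or some $\mathcal{E}$ other than the infinity letter represents a nonzero (even) number of twists, then $L_3$ is not fibred. Note first that if any $\mathcal{O}$ occurs then a non-infinity $\mathcal{O}$ occurs, since by our parity convention the total number of $\mathcal{O}$'s is even. Since $L_3$ is fibred precisely when the complementary sutured manifold $(M_R,s_R)$ to $R$ is a product sutured manifold, it suffices to show that $(M_R,s_R)$ is not a product. I would use the sequence of sutured manifold decompositions $(M_R,s_R)\rightsquigarrow\cdots\rightsquigarrow(M_R',s_R')$ built in the proof of Proposition \ref{tautnessproofprop}, every term of which is taut, where $(M_R',s_R')$ is the word manifold of Lemma \ref{wordmanifoldlemma} associated to a word $w_R$ that contains the letter $C$ arising from the infinity letter. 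Applying Proposition \ref{fibrednessprop} inductively along this taut sequence shows that if $(M_R,s_R)$ were a product then so would be $(M_R',s_R')$. Hence the problem reduces to proving that $(M_R',s_R')$ is \emph{not} a product sutured manifold.

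The offending letter of $\mathcal{W}$ contributes to $w_R$ a block carrying at least one half-twist: by the correspondence set up in the final step of the proof of Proposition \ref{tautnessproofprop}, a non-infinity $\mathcal{O}$ gives a letter $A$ (or $D$), while a non-infinity $\mathcal{E}$ with nonzero twists gives a letter $B$ (or $E$) whose twist parameter is at least $2$. In either case the corresponding part of $(M_R',s_R')$ is of the form treated in Corollary \ref{decomposeblockscor}, and I would decompose along the visible disc $S$ of Figure \ref{decompositionspic4}e. Because $\partial S$ is forced to cross the two suture arcs running through a nontrivial twist region, it meets $s_R'$ more than twice, so $S$ is neither a product disc (which meets the sutures in exactly two points) nor a product annulus (it is a disc). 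Decomposing $(M_R',s_R')$ along $S$ with either orientation alters only this block and leaves the remainder of $w_R$ fixed, in particular retaining the letter $C$; each result is therefore again described by a word meeting the hypotheses of Lemma \ref{wordmanifoldlemma} and still containing a $C$, and so by that lemma both results are taut. Proposition \ref{notfibredprop} then yields that $(M_R',s_R')$ is not a product, which is what we wanted.

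The hard part will be verifying that \emph{both} orientations of $S$ give taut manifolds. For an even twist parameter $e\geq 2$ this is immediate, since Corollary \ref{decomposeblockscor} returns the two distinct taut configurations of Figures \ref{decompositionspic3}c and \ref{decompositionspic3}d; for an odd twist parameter only one orientation is recorded there, so I would check separately that the reverse orientation also lands in a taut word manifold (alternatively, continue decomposing to a solid torus whose longitudinal sutures no longer form a single pair, making $R_+$ disconnected, and invoke Remark \ref{testfibredremark}). The same analysis also explains the two exemptions in the statement: a trivial $\mathcal{E}$ has $e=0$ and, after clearing its paired loops by Lemma \ref{removepairedloopslemma}, its disc meets $s_R'$ only twice, i.e.\ is a product disc to which Proposition \ref{notfibredprop} does not apply, while the half-twists attached to the infinity letter live in the slope-$\infty$ monogon block of Figure \ref{algorithmpic5}d rather than in a bigon of $\mathbb{D}$ and so never produce such a compressing disc. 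Finally, the asserted equivalence is a direct reading of the construction: $\mathbb{D}$ is the disc subdivided into two monogons joined by a line of bigons, so it embeds in the projection plane exactly when every bigon region is twist-free, that is, when no non-infinity $\mathcal{O}$ occurs (an $\mathcal{O}$ records an odd, hence nonzero, number of crossings) and every non-infinity $\mathcal{E}$ represents no twists.
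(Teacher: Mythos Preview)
Your treatment of the $\mathcal{E}$ case (a non-infinity $\mathcal{E}$ with $e\geq 2$) is essentially the paper's argument: decompose along the visible disc $S$ with both orientations, check both results are taut via Lemma \ref{wordmanifoldlemma} (the word still contains the $C$ from the infinity letter), and invoke Proposition \ref{notfibredprop}. The paper does exactly this, noting that the second orientation turns the local $aB\alpha$ into $\beta b$ (Figure \ref{fibrednesspic1}).

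Your treatment of the $\mathcal{O}$ case, however, diverges from the paper and contains the gap you yourself flag. You want to apply Proposition \ref{notfibredprop} to the disc $S$ sitting in an $aA\alpha$ block, but for an odd twist parameter Corollary \ref{decomposeblockscor} only records one orientation, and you do not actually carry out the check for the other. Your suggested alternative (``continue decomposing to a solid torus whose longitudinal sutures no longer form a single pair'') is the right instinct but is not what the paper does, and as stated it would require tracking suture counts through the entire inductive decomposition of Lemma \ref{wordmanifoldlemma}. The paper instead bypasses Proposition \ref{notfibredprop} entirely for this case: the presence of an $\mathcal{O}$ forces at least one pairing, and for a pair with \emph{maximal} joining word one end of the tube contributes an $a$ block with $k=l=1$ in $w_R$, while the adjacent $A/B/C$ and $\alpha$ blocks (lying outside every joining word) carry $n=1$. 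Isotoping one suture converts this to $k=2$, $l=0$, which exhibits two parallel closed sutures on the same side of $\partial M_R'$ and hence a disconnected $R_+$ or $R_-$ in $(M_R',s_R')$ itself. That immediately rules out $(M_R',s_R')$ being a product (Remark \ref{testfibredremark}), with no need to analyse a second orientation of any decomposing surface.

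So: keep your argument for the $\mathcal{E}$ part, but for the $\mathcal{O}$ part replace the attempted two-orientation disc argument with the direct disconnection observation coming from the suture parameters at the end of a maximal tube.
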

\begin{proof}
Since $L_3$ is fibred and $R$ is a taut Seifert surface for $L_3$, the complementary sutured manifold $(M_R,s_R)$ to $R$ is a product sutured manifold. 
Given Remark \ref{testfibredremark}, this means that every sutured manifold in the sequence of decompositions in Section \ref{tautnesssection} must also be a product sutured manifold. In particular, $(M_R',s_R')$ must be a product sutured manifold (as must every step in its decomposition).
This implies that $R_+$ and $R_-$ must be connected. 

Suppose that $\mathcal{W}$ contains at least one letter $\mathcal{O}$. 
It then contains at least one pair of instances of the letter $\mathcal{A}$.
Choose a pair with a maximal joining word.
From the proof of Proposition \ref{tautnessproofprop}, we know that this pair gives rise to two sections of $(M_R',s_R')$ as shown in either Figure \ref{tautnesspic4} or Figure \ref{tautnesspic5}. Without loss of generality, assume they are as in Figure \ref{tautnesspic5}. The right-hand section gives a string $\alpha a$ in the word $w_R$ constructed in the proof of Proposition \ref{tautnessproofprop} with $k=l=1$. The next letter in $w_R$ is an $A$, $B$ or $C$, with $n=1$. Following that is an $\alpha$, also with $n=1$.
By isotoping one suture, we could alter this sequence so that the $a$ has $k=2$ and $l=0$, while the next two letters each have $n=0$. With $k=2$, we immediately see that either $R_+$ or $R_-$ is disconnected, a contradiction. Hence we find that $\mathcal{W}$ contains no letter $\mathcal{O}$.

With no $\mathcal{O}$ in $\mathcal{W}$, there also cannot be any $-$ symbols, and so there are no paired letters. Thus the Seifert surface $R$ is formed from the disc $\mathbb{D}$ and the annulus $\mathbb{A}$ by making the change depicted in Figure \ref{algorithmpic10} once for each $\mathcal{A}$ in $\mathcal{W}$.

Now suppose that there is a letter $\mathcal{E}$ in $\mathcal{W}$ that is not the infinity letter and represents at least two twists. Since the infinity letter is also an $\mathcal{E}$, we know that $\mathcal{W}$ has length at least $6$. This $\mathcal{E}$ gives us a section of $(M_R',s_R')$ corresponding to a letter $B$ in $w_R$. The $B$ is both preceded and followed in $w_R$ by the string $\alpha a$. First focus on the central string $aB\alpha$. Let $S$ be the disc that is visible when these three blocks are pictured together in order (similar to Figures \ref{decompositionspic4}e and \ref{decompositionspic11}). We wish to show that the sutured manifold we have is not taut by applying Proposition \ref{notfibredprop} to the disc $S$. Note that, because the $\mathcal{E}$ represents at least two twists, $\partial S$ intersects the sutures more than twice, and so $S$ is not a product disc.

We have already performed the decomposition along $S$ with one orientation in the proof of Proposition \ref{tautnessproofprop}, and in this case the result of the decomposition is taut. Accordingly, we now need to consider the effect of decomposing using the other orientation on $S$. In practice this means that, instead of using the decomposition shown in Figure \ref{decompositionspic11}, we make use of the alternative decomposition offered by Corollary \ref{decomposeblockscor} (which we can do because the $\mathcal{E}$ represents at least two twists).
The result of this decomposition is shown in Figure \ref{decompositionspic14}. Combining this, now, with the adjacent $a$ and $\alpha$ blocks gives the piece of sutured manifold shown in Figure \ref{fibrednesspic1}. In the language of Lemma \ref{wordmanifoldlemma}, this corresponds to $\beta b$. Thus we again arrive at a sutured manifold meeting the hypotheses of Lemma \ref{wordmanifoldlemma}. Since the corresponding word contains a $b$, Lemma \ref{wordmanifoldlemma} says that this sutured manifold is also taut.
\begin{figure}[htbp]
\centering
\input{picturefiles/fibrednesspic1.tex}
\caption{\label{fibrednesspic1}}
\end{figure}

This shows that the hypotheses of Proposition \ref{notfibredprop} hold, and so $(M_R,s_R)$ is not a product sutured manifold. This is again a contradiction. Hence every $\mathcal{E}$ in $\mathcal{W}$ other than the infinity letter represents zero twists.
\end{proof}

\begin{proposition}\label{isfibredprop}
Suppose that $\mathcal{W}$ contains no letter $\mathcal{O}$, and every $\mathcal{E}$ other than the infinity letter represents no twists. Then $L_3$ is fibred.
\end{proposition}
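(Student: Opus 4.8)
The plan is to run the argument of Proposition \ref{tautnessproofprop} in reverse. By the criterion recalled at the start of this section, the non-split link $L_3$ is fibred if and only if the complementary sutured manifold $(M_R,s_R)$ to the taut surface $R$ is a product sutured manifold, so it suffices to exhibit $(M_R,s_R)$ as a product. First I would observe that the hypotheses force the construction of $R$ to be as simple as possible. With no letter $\mathcal{O}$ in $\mathcal{W}$ we have $N_-=0$, so no letters are paired and no tubes are built in Step \ref{tubingstep}; since $\mathcal{W}$ contains at least one $\mathcal{A}$ we have $N_+>0$ and $R$ is connected. Consequently the only sutured manifold decompositions performed in the proof of Proposition \ref{tautnessproofprop} are the unpaired-$\mathcal{A}$ decompositions of Step \ref{decomposeunpairedstep}, and each of these is a decomposition along a \emph{product disc} (as recorded in that step and visible in Figure \ref{tautnesspic1}); the tube decompositions of Step \ref{decomposetubesstep} never occur.

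Using this, I would invoke Proposition \ref{productprop}: because a product disc decomposition carries a product sutured manifold to a product sutured manifold and conversely, $(M_R,s_R)$ is a product if and only if the terminal manifold $(M_R',s_R')$ of these decompositions is. This reduces the proposition to proving that $(M_R',s_R')$ is a product. Under the present hypotheses the describing word $w_R$ of $(M_R',s_R')$ consists only of the letters $\alpha$ and $a$ (one string $\alpha a$ for each $\mathcal{A}$), a letter $B$ with trivial block for each non-infinity $\mathcal{E}$ (recall every such $\mathcal{E}$ represents no twists), and a single letter $C$ coming from the infinity letter, whose twisting records the crossings between $L_+$ and $L_-$.

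Next I would show that $(M_R',s_R')$ is a product by a further sequence of decompositions, all along product discs and product annuli, terminating at a manifold already known to be a product. Here the triviality of the $\mathcal{E}$-blocks is exactly what is needed: because each such block carries no twists, the discs realising the inductive reductions of Lemma \ref{wordmanifoldlemma} meet the sutures in precisely two points and so are product discs, and the strings $\alpha a$ can be absorbed by product disc or product annulus decompositions. Running these reductions until only the (closed up) $C$-block remains yields a solid torus carrying two longitudinal sutures, which is the product sutured manifold $(\textrm{annulus})\times[0,1]$ with sutures $\partial(\textrm{annulus})\times\{\frac{1}{2}\}$. Applying Proposition \ref{productprop} back up this chain shows first that $(M_R',s_R')$ is a product, and hence that $(M_R,s_R)$ is a product; thus $L_3$ is fibred.

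The main obstacle I expect is the bookkeeping in this last step: I must verify that every decomposition used to simplify $(M_R',s_R')$ genuinely is along a product disc or product annulus, so that the biconditional of Proposition \ref{productprop} applies in both directions, and I must track the markings and orientations carefully so that the terminal solid torus carries exactly two (rather than more) longitudinal sutures and is a true product. The delicate point is the $C$-block: its twisting must be shown merely to reparametrise the longitude without disconnecting $R_+$ or $R_-$, so that after the reductions we land on $(\textrm{annulus})\times[0,1]$ and not on a solid torus with meridional, or with more than two longitudinal, sutures, either of which would spoil the product structure.
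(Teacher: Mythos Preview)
Your proposal is correct and follows essentially the same route as the paper: reduce $(M_R,s_R)$ to $(M_R',s_R')$ via the product-disc decompositions of Step~\ref{decomposeunpairedstep}, then inductively simplify $(M_R',s_R')$ by decomposing each $aB\alpha$ block along a product disc until only the closed-up $aC\alpha$ block remains, which is a solid torus with two longitudinal sutures. The paper resolves exactly the bookkeeping you flag by recording that each $a$ has $k=0,\ l=1$ and each $\alpha$ has $m=0,\ n=1$ (read off from Figure~\ref{tautnesspic1}), so that $k+l+m=1$ throughout; together with the hypothesis that each $e$ is zero this makes every decomposing disc a genuine product disc and forces the terminal solid torus to carry exactly two longitudinal sutures --- no product annuli are needed.
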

\begin{proof}
As in the proof of Proposition \ref{discembeddedprop}, with no $\mathcal{O}$ in $\mathcal{W}$ there are no paired letters, and $R$ is formed from the disc $\mathbb{D}$ and the annulus $\mathbb{A}$ by making the change depicted in Figure \ref{algorithmpic10} once for each $\mathcal{A}$ in $\mathcal{W}$.
We will show that the complementary sutured manifold $(M_R,s_R)$ to $R$ is a product sutured manifold by repeatedly using Proposition \ref{productprop} until we reach a manifold we can easily identify. More precisely, we will trace through the proof of Proposition \ref{tautnessproofprop} and see that, with the current restrictions in $\mathcal{W}$, every decomposition used is along a product disc.

First notice that, because there are no paired letters in $\mathcal{W}$, and therefore no tubes in $R$, every decomposition used to pass from $(M_R,s_R)$ to $(M_R',s_R')$ is as in Figure \ref{tautnesspic1}. This decomposition is along a product disc, as required. Thus, by Proposition \ref{productprop}, $(M_R,s_R)$ is a product sutured manifold if and only if $(M_R',s_R')$ is.

The word $w_R$ that describes $(M_R',s_R')$ is made up of the string $aB\alpha$ repeated $N_R$ times and the string $aC\alpha$, for some $N_R\geq 0$.
From Figure \ref{tautnesspic1}, we can see that for each $a$ in the word $w_R$ we have $k=0$ and $l=1$, while for each $\alpha$ in $w_R$ we have $m=0$ and $n=1$.
We will now show that $(M_R',s_R')$ is a product sutured manifold by induction on $N_R$.

First suppose that $N_R=0$. Then $w_R$ is $aC\alpha$, and we can obtain a picture of $(M_R',s_R')$ by `closing up' the block shown in Figure \ref{decompositionspic11}. Since $k+l+m=1$ in this case, we find that $(M_R',s_R')$ is a solid torus with two longitudinal sutures, which is a product sutured manifold.

Suppose instead that $N_R>0$. In this case we decompose the section of $(M_R',s_R')$ coming from one instance of $aB\alpha$ in $w_R$ as in Figure \ref{decompositionspic11}. As $k+l+m=1$ and the $e$ here represents zero twists, the disc we decompose along is again a product disc. After this decomposition, the resulting sutured manifold is what we would have had if the value of $N_R$ had been one lower. By induction, this is a product sutured manifold, and so Proposition \ref{productprop} again tells us that $(M_R',s_R')$ is also a product sutured manifold, as required.
\end{proof}

\begin{corollary}\label{whenfibredcor}
The three-bridge link $L_3$ is fibred if and only if, in the continued fraction $[a_1,a_2,a_3,\ldots,a_N]$ of the two-bridge link $L_2$, either $N=1$ and $a_1$ is even, or $N>1$ and both $a_1$ and $a_N$ are odd while $a_i$ is even for $1<i<N$.
\end{corollary}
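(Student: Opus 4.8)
The plan is to feed Propositions \ref{discembeddedprop} and \ref{isfibredprop} into one another: together they say that $L_3$ is fibred if and only if the disc $\mathbb{D}$ is embedded in the projection plane. Since $\mathbb{D}$ is the disc bounded by $L_{\mathbb{D}}$ in the given diagram, this happens exactly when $L_{\mathbb{D}}$ is a simple closed curve there, i.e.\ when $L_{\mathbb{D}}$ has no self-crossings. So everything reduces to reading off, from the braid $\sigma_2^{a_1}\sigma_3^{-a_2}\cdots\sigma_2^{a_N}$, when the component $L_{\mathbb{D}}$ is crossingless, and matching this against the stated parities.

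First I would track component membership position by position along the braid. Label strands by their top positions $1,2,3,4$, so that $L_{\mathbb{A}}$ consists of strands $1,2$ and $L_{\mathbb{D}}$ of strands $3,4$. Because $\sigma_2$ crosses positions $2,3$ and $\sigma_3$ crosses positions $3,4$, position $1$ is never involved in a crossing: its strand always lies in $L_{\mathbb{A}}$ and crosses nothing (so $L_{\mathbb{A}}$ is automatically an embedded planar loop, as the construction demands). Among positions $2,3,4$ exactly one strand is coloured $\mathbb{A}$ at any stage; let $p\in\{2,3,4\}$ be its position, so $p=2$ at the top. A $\sigma_2^{a_i}$-block (odd index $i$) consists of $a_i\ge 1$ crossings of positions $2,3$, which are self-crossings of $L_{\mathbb{D}}$ precisely when $p=4$, and it toggles $p$ between $2$ and $3$ when $a_i$ is odd; a $\sigma_3^{-a_j}$-block (even index $j$) gives self-crossings of $L_{\mathbb{D}}$ precisely when $p=2$, and toggles $p$ between $3$ and $4$ when $a_j$ is odd. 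Hence $\mathbb{D}$ is embedded if and only if $p\ne 4$ entering every $\sigma_2$-block and $p\ne 2$ entering every $\sigma_3$-block.

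Running this forward, $p=2$ entering the first ($\sigma_2$) block is harmless, but to have $p\ne 2$ at the following $\sigma_3$-block we must toggle, forcing $a_1$ odd and giving $p=3$. An immediate induction then shows that $p$ stays at $3$ entering each subsequent block exactly when every interior exponent $a_2,\dots,a_{N-1}$ is even, an odd interior exponent being what first drives $p$ to a forbidden value at the next block. Thus, for $N>1$, $\mathbb{D}$ is embedded if and only if $a_1$ is odd and $a_2,\dots,a_{N-1}$ are even. This analysis says nothing about $a_N$, since no block follows the last one; the parity of $a_N$ is instead pinned down by the standing hypothesis that $L_2$ is a two-component link. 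The braid permutation is $(2\,3)^{\epsilon_1}(3\,4)^{\epsilon_2}\cdots(2\,3)^{\epsilon_N}$ with $\epsilon_i=a_i\bmod 2$, which in the embedded case equals $(2\,3)(2\,3)^{\epsilon_N}$; this is the identity when $a_N$ is odd, closing up to the two components $\{1,2\}$ and $\{3,4\}$, whereas for $a_N$ even it is $(2\,3)$ and tracing the closure arcs shows $L_2$ is then a knot. So among two-component links, $\mathbb{D}$ is embedded if and only if $a_1,a_N$ are odd and $a_2,\dots,a_{N-1}$ are even.

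The case $N=1$ is immediate: the braid $\sigma_2^{a_1}$ has no $\sigma_3$-block, so $p$ never leaves $2$ and $\mathbb{D}$ is always embedded, while two-componentness forces $a_1$ even, giving the first alternative. Combining both directions with Propositions \ref{discembeddedprop} and \ref{isfibredprop} yields the corollary. The step I expect to need the most care is exactly this interplay with the two-component hypothesis: embeddedness of $\mathbb{D}$ constrains $a_1$ and the interior exponents but is genuinely silent about $a_N$, and it is the requirement that $L_2$ be a link rather than a knot that supplies the parity of $a_N$. I would also confirm that the blocks $\mathcal{B},\mathcal{C},\mathcal{D}$ of Figure \ref{algorithmpic5}a, treated in the appendix, change only the side on which $L_{\mathbb{A}}$ runs over $\mathbb{D}$ and not the self-crossing pattern of $L_{\mathbb{D}}$, so the same count governs the general case.
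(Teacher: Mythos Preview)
Your proposal is correct and follows the same approach as the paper: combine Propositions~\ref{discembeddedprop} and~\ref{isfibredprop} to reduce to the question of when $\mathbb{D}$ is embedded, then read this off from the continued fraction. The paper compresses the second step into ``by examining Figure~\ref{algorithmpic1}'', whereas you carry out the strand-tracking argument explicitly and correctly, including the nice observation that embeddedness alone does not constrain $a_N$ and that its parity is fixed instead by the standing two-component hypothesis.
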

\begin{proof}
Combining Propositions \ref{discembeddedprop} and \ref{isfibredprop}, we see that $L_3$ is fibred if and only if the disc $\mathbb{D}$ is embedded in the projection plane. By examining Figure \ref{algorithmpic1} we find that this is the case exactly in the situation described.
\end{proof}

\section{Satellites}\label{satellitesection}

We now turn to considering certain satellite links, in the direction of the work of Hirasawa and Murasugi in \cite{zbMATH05820036}.

Let $V$ be a solid torus, and $L\subseteq \Int(V)$ a link that does not lie within a ball in $V$ and is not a core curve of $V$. 
Choose a longitude $\lambda$ on the torus $\partial V$, and let $l$ be the winding number of $L$ in $V$ (that is, $[L]=l[\lambda]\in H_1(V)$).
Let $K_c\subseteq\Sphere$ be a knot other than the unknot, and let $R_c\subseteq\Sphere\setminus\nhd(K_c)$ be a (taut) Seifert surface for $K_c$.

\begin{definition}
A \textit{satellite link $\hat{L}$ with pattern $(V,L,\lambda)$ and companion $K_c$} is the image of $L$ in $\Sphere$ under a homeomorphism $i\colon V\to\nhd(K_c)\subseteq\Sphere$ that sends $\lambda$ to $\partial R_c$.
\end{definition}

\begin{definition}
Say that a surface $R\subseteq V$ is an \textit{oriented spanning surface for the pattern $(V,L,\lambda)$} if $R$ is oriented and $\partial R$ consists of $L$ (as an oriented link) together with some simple closed curves on $\partial V$ parallel to $\lambda$. 

Say that $R$ is \textit{taut} if it has maximal Euler characteristic among oriented spanning surfaces for $(V,L,\lambda)$.
If $R$ is taut, set $\chi(V,L,\lambda)=\chi(R)$.
\end{definition}

Note that, by considering homology, we know $|R\cap\partial V|\geq|l|$.

As for a Seifert surface in $\Sphere$, we can use $R$ to form a sutured manifold by removing a product neighbourhood of $R$ from $V$ and dividing up the boundary of the resulting manifold according to the orientation of $R\cup\partial V$. 
We call the resulting sutured manifold the \textit{complementary sutured manifold to $R$ in $V$}. This enables us to make the following definition.

\begin{definition}\label{patternfibreddefn}
The pattern $(V,L,\lambda)$ is \textit{fibred} if there is an oriented spanning surface $R_{\lambda}\subseteq V$ such that $|R_{\lambda}\cap\partial V|=|l|$ and the complementary sutured manifold to $R_{\lambda}$ in $V$ is a product sutured manifold.
\end{definition}

Note that our definition of the complementary sutured manifold calls for an orientation on $\partial V$. 
For the purposes of determining whether a pattern is fibred it does not matter which orientation is chosen, provided the whole of $\partial V$ is oriented the same way.  The condition $|R_{\lambda}\cap\partial V|=|l|$ means that all the curves of $R_{\lambda}\cap\partial V$ are oriented the same way.
If $|l|\neq 0$ then the two choices of orientation of $\partial V$
 will in fact yield the same sutured manifold. If $|l|=0$ then $(V,L,\lambda)$ can never be fibred, since the sutured manifold has disconnected boundary.

The following result shows that we can, if desired, drop the condition $|R_{\lambda}\cap\partial V|=|l|$ from Definition \ref{patternfibreddefn}.

\begin{lemma}
Let $R$ be an oriented spanning surface for $(V,L,\lambda)$. Then there is an oriented spanning surface $R'$ with $\chi(R)=\chi(R')$ and $|R'\cap\partial V|=|l|$. Moreover, $R'$ can be chosen such that the complementary sutured manifold to $R$ is a product sutured manifold if and only if the same is true for $R'$.
\end{lemma}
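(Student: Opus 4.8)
The plan is to remove oppositely oriented curves of $R\cap\partial V$ two at a time by an annulus surgery, checking at each stage that neither the Euler characteristic nor the product property of the complementary sutured manifold is disturbed; the required $R'$ will be the end result of iterating this move.

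First I would record the homological bookkeeping. Every component of $R\cap\partial V$ is a copy of $\lambda$ carrying class $+[\lambda]$ or $-[\lambda]$ in $H_1(\partial V)$. As $R$ provides a bounding surface, $[\partial R]=0$ in $H_1(V)$, and since $[L]=l[\lambda]$ this forces $p-q=-l$, where $p$ and $q$ count the positively and negatively oriented boundary curves on $\partial V$; note $p+q=|R\cap\partial V|$. Hence if $|R\cap\partial V|>|l|$ we have $p,q>0$, so among the cyclically arranged parallel curves on the torus $\partial V$ there are two \emph{adjacent} components $c_1,c_2$ of opposite orientation, cobounding an annulus $A\subseteq\partial V$ whose interior is disjoint from $R\cap\partial V$.

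Next I would perform the surgery. Working inside a collar $\partial V\times[0,\epsilon]$ in which $R$ meets the collar in $(c_1\cup c_2)\times[0,\epsilon]$, push $A$ inward to a parallel annulus $A'$ at level $\epsilon$, delete from $R$ the two collar annuli running to $c_1$ and $c_2$, and glue in $A'$. Because $c_1$ and $c_2$ are oppositely oriented, the two boundary circles of $A'$ receive opposite induced orientations, so $A'$ can be oriented compatibly and the result $R'$ is an oriented, embedded surface still having $L$ as interior boundary but with two fewer curves on $\partial V$. Tubing two boundary circles together by an annulus leaves $\chi$ unchanged, so $\chi(R')=\chi(R)$. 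Iterating until $p$ or $q$ reaches $0$ produces a spanning surface with $|R'\cap\partial V|=|l|$.

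The hard part is the final clause, that the product property is preserved by each surgery. Here I would take the annulus $A$ and push it into the interior of the complementary sutured manifold, nudging its two boundary curves off the sutures $c_1$ and $c_2$; precisely because $c_1,c_2$ carry opposite orientations, one boundary curve of the resulting annulus $\hat A$ lands on $R_+$ and the other on $R_-$, so $\hat A$ is a \emph{product annulus}. The two complementary sutured manifolds, to $R$ and to $R'$, then differ by a sutured manifold decomposition along $\hat A$, and Proposition~\ref{productprop} (an \emph{if and only if}) shows that one is a product sutured manifold exactly when the other is. Applying this at each surgery, the composite equivalence gives the stated iff for the final $R'$. I expect the main subtlety to be verifying rigorously that the annulus surgery on $R$ is realized by a genuine product-annulus decomposition of the complement, with the opposite-orientation hypothesis on $c_1,c_2$ supplying exactly the $R_+/R_-$ boundary condition that $\hat A$ must satisfy.
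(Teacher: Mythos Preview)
Your overall strategy matches the paper's: find adjacent oppositely-oriented curves $c_1,c_2$ on $\partial V$, do annulus surgery to reduce $|R\cap\partial V|$ by two while preserving $\chi$, and use a product annulus together with Proposition~\ref{productprop} for the product-manifold equivalence. The homological argument locating $c_1,c_2$ and the Euler-characteristic calculation are fine.

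The gap is in your product annulus. You place $\hat A$ in the complement of the \emph{original} surface $R$ (you speak of nudging off ``the sutures $c_1,c_2$'', which are sutures only there) and assert that the opposite orientations of $c_1,c_2$ put one boundary of $\hat A$ on $R_+$ and the other on $R_-$. In fact the opposite orientations have the reverse effect: with $\partial V$ oriented into $V$ (so $\partial V\subset R_-$), the normals to $R$ at $c_1$ and $c_2$ point in opposite $\phi$-directions, hence both into $A$ or both out of $A$. Consequently the $R_+$ copies of $R$ lie on the same side of $A$ at both ends, so nudging both boundaries of $\hat A$ across the sutures lands both on $R_+$, while leaving them on $\partial V$ lands both on $R_-$. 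Either way $\hat A$ is not a product annulus, and being boundary-parallel in $(M,s)$ its decomposition cannot produce $(M'',s'')$.

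The paper's fix is to place the product annulus $S$ in the complement $(M'',s'')$ of the \emph{surgered} surface $R''$, with one boundary on the tubed-in annulus $A'\subset R''$ and the other on $\partial V$. One boundary then lies on a copy of $R''$ (in $R_+$, since the normal to $A'$ points toward $\partial V$) and the other on $\partial V$ (in $R_-$), so $S$ really is a product annulus; decomposing $(M'',s'')$ along $S$ yields $(M,s)$, and Proposition~\ref{productprop} gives the iff.
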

\begin{proof}
Without loss of generality, give $\partial V$ an orientation pointing into $V$.
Recall that $|R\cap\partial V|\geq|l|$. If $|R\cap\partial V|>|l|$ then there exist two curves $\lambda_1$ and $\lambda_2$ of $R\cap\partial V$ that are adjacent in $\partial V$ such that the orientations of these surfaces are locally as shown in Figure \ref{satellitepic1}a.
Surgery on $R$ along the sub-annulus of $\partial V$ between $\lambda_1$ and $\lambda_2$, and otherwise disjoint from $R$, gives a new surface $R''\subset V$ with $\partial R''=\partial R\setminus(\lambda_1\cup\lambda_2)$ (plus a boundary-parallel annulus that we discard). Note that $\chi(R'')=\chi(R)$.
\begin{figure}[htbp]
\centering
(a)
\input{picturefiles/satellitepic1a.tex}
(b)
\input{picturefiles/satellitepic1b.tex}
\caption{\label{satellitepic1}}
\end{figure}

Let $(M,s)$ and  $(M'',s'')$ be the complementary sutured manifolds to $R$ and $R''$ respectively. 
There is a product annulus $S$ in $(M'',s'')$ with one boundary component on $R''$ and the other on $\partial V$, as shown in Figure \ref{satellitepic1}(b).
Performing a sutured manifold decomposition along this product annulus results in the sutured manifold $(M,s)$. Therefore, by Proposition \ref{productprop}, $(M'',s'')$ is a product sutured manifold if and only if $(M,s)$ is.

Thus, by induction, we can repeat this process until we arrive at an oriented spanning surface that meets $\partial V$ in exactly $|l|$ curves.
\end{proof}

Note that if we had instead chosen to orient $\partial V$ pointing out of $V$ then the proof would have been the same except for reversing the arrows in Figure \ref{satellitepic1}.
Another option would have been to orient $\partial V$ differently when constructing the complementary sutured manifolds to $R$ and $R'$. However, we have already seen that the orientation on $\partial V$ does not affect whether the complementary sutured manifold to $R'$ is a product sutured manifold (although there is no a priori reason for this to be true for $R$).

\begin{theorem}[\cite{zbMATH05545482} Theorem 1]
The satellite link $\hat{L}$ is fibred if and only if both the companion knot $K_c$ and the pattern $(V,L,\lambda)$ are fibred.
\end{theorem}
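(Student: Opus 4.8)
The plan is to reduce the biconditional to the behaviour of complementary sutured manifolds under the essential torus $T=\partial V$ that carries the satellite structure, and then to apply Gabai's product-annulus result (Proposition \ref{productprop}). First I would build an explicit Seifert surface for $\hat L$: take a taut oriented spanning surface $R_\lambda$ for the pattern with $|R_\lambda\cap\partial V|=|l|$, which exists by the preceding lemma, carry it into $\nhd(K_c)$ by $i$, and cap off each of the $|l|$ boundary longitudes on $\partial\nhd(K_c)=T$ with a parallel copy of $R_c$. Since $i(\lambda)=\partial R_c$, these copies may be chosen disjoint and parallel in $E(K_c)$, and the union $\hat R=i(R_\lambda)\cup R_c^{(1)}\cup\dots\cup R_c^{(|l|)}$ is an oriented Seifert surface for $\hat L$. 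I would dispose of the degenerate case $l=0$ (where the pattern is never fibred, and one checks that $\hat L$ is not fibred either, since $\hat R$ would be disjoint from the essential torus $T$) separately, and assume $l\neq0$ from here on.

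The key geometric observation is how $T$ meets the complementary sutured manifold $(M,s)$ of $\hat R$. Since $\hat R$ meets $T$ in the $|l|$ coherently oriented longitudes, removing a product neighbourhood of $\hat R$ cuts $T$ into $|l|$ annuli $B_1,\dots,B_{|l|}$, and I would verify that each $B_i$ is a product annulus: its two boundary circles are the $+$-pushoff of one intersection curve and the $-$-pushoff of the next, so they lie one in $R_+$ and one in $R_-$. Decomposing $(M,s)$ successively along the disjoint product annuli $B_1,\dots,B_{|l|}$ amounts to cutting $\Sphere$ along $T$, and the result is the disjoint union of the pattern piece $M\cap V$ and the companion piece $M\cap E(K_c)$. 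The pattern piece is exactly the complementary sutured manifold to $R_\lambda$ in $V$ from Definition \ref{patternfibreddefn}, so it is a product sutured manifold if and only if $(V,L,\lambda)$ is fibred. The companion piece is $E(K_c)$ cut along the $|l|$ parallel copies of $R_c$, namely the complementary sutured manifold to $R_c$ together with $|l|-1$ product slabs $R_c\times[0,1]$; as products are always products, this is a product sutured manifold if and only if $K_c$ is fibred.

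With these identifications in place the biconditional follows from Proposition \ref{productprop}, since decomposition along a product annulus preserves being a product sutured manifold in both directions: $(M,s)$ is a product if and only if both the pattern and companion pieces are, that is, if and only if both $(V,L,\lambda)$ and $K_c$ are fibred. It then remains to connect product-ness of $(M,s)$ to fibredness of $\hat L$. The backward direction is immediate, because a link whose complementary sutured manifold is a product is fibred. For the forward direction I need $\hat R$ to be taut, so that fibredness of $\hat L$ forces $(M,s)$ to be a product via the criterion that a non-split link is fibred if and only if the complementary sutured manifold to a taut Seifert surface is a product. I would establish this by showing $(M,s)$ is itself taut: the two decomposition pieces are taut because $R_\lambda$ and $R_c$ are taut (using the Scharlemann--Thompson/Kakimizu reformulation of tautness, as in Section \ref{tautnesssection}), the annuli $B_i$ meet the hypotheses of Proposition \ref{tautnessprop} (their boundary curves are essential longitudes, bounding no disc in $R_\pm$), so tautness propagates back up to $(M,s)$, whence $\hat R$ is taut.

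The main obstacle I expect is precisely this forward implication, and in particular verifying that $\hat R$ is taut and that $\hat L$ is non-split, since that is where the genuine three-manifold content lies: one must rule out a strictly better Seifert surface for $\hat L$ that disregards the satellite torus. The remaining points—checking that each $B_i$ is an honest product annulus with the correct orientations and essential boundary, that it satisfies the hypotheses of Propositions \ref{productprop} and \ref{tautnessprop}, and the bookkeeping of the orientation chosen on $\partial V$ and of the $|l|-1$ product slabs on the companion side—are routine but should be carried out with care.
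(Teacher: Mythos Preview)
The paper does not give its own proof of this statement; it is quoted verbatim as Theorem~1 of \cite{zbMATH05545482}, and the paper explicitly remarks that the proofs in that reference proceed via an equivalent definition of pattern fibredness in terms of fundamental groups. So there is no proof in the paper to which your argument can be compared directly.

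That said, the paper does surround the statement with an informal sutured-manifold discussion, and it is worth noting how that discussion differs from your plan. Rather than building $\hat R$ from $R_\lambda$ and copies of $R_c$ and then proving it taut, the paper starts from a taut Seifert surface $R_{\hat L}$ chosen to have minimal intersection with $T$, asserts that $R_{\hat L}\cap T$ then consists of exactly $|l|$ coherently oriented longitudes, and reads off the decomposition into a taut spanning surface for the pattern together with $|l|$ taut Seifert surfaces for $K_c$. The $l=0$ case is handled by observing that $T$ is then an essential torus disjoint from the Seifert surface, and a product sutured manifold contains no essential torus. Your construct-then-verify route and the paper's choose-taut-then-decompose route are dual to one another; the paper's version sidesteps exactly the obstacle you flagged (proving $\hat R$ taut), at the cost of needing the fact that a taut surface can be isotoped to meet $T$ minimally in $|l|$ longitudes. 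Your product-annulus argument via Proposition~\ref{productprop} is sound and is in the same spirit as the paper's remark that a sutured-manifold proof of Theorem~\ref{addmeridiansthm} exists; it simply is not the proof the cited reference gives.
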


The proofs in \cite{zbMATH05545482} work from an equivalent definition of when a pattern is fibred given in terms of fundamental groups. It is noted in the paper that there exists an alternative proof of \cite{zbMATH05545482} Theorem 2 (see Theorem \ref{addmeridiansthm} below) via sutured manifold theory.
Using these ideas yields some useful information about the fibre surfaces. 
Let $T$ be the torus $\partial\nhd(K_c)=\partial i(V)$, and choose a taut Seifert surface $R_{\hat{L}}$ for $\hat{L}$ that has minimal intersection with $T$ among such surfaces. Then $R_{\hat{L}}\cap T$ will be $|l|$ simple closed curves parallel to $\lambda$, all oriented in the same direction.

\begin{remark}
If instead we had taken $K_c$ to be the unknot, this statement would not hold. While it would be possible to arrange that $|R_{\hat{L}}\cap T|=|l|$, it might be necessary to increase the genus of $R_{\hat{L}}$ in order to do so. This is why some patterns are not fibred even though within $\Sphere$ they give fibred links. However, if there were a taut Seifert surface for $R_{\hat{L}}$ that could be made to intersect $T$ in only $|l|$ simple closed curves then the pattern would be fibred if and only if the link in $\Sphere$ was.
\end{remark}

If $l=0$ then $R_{\hat{L}}\cap T=\emptyset$. Because $T$ is essential in $\Sphere\setminus\nhd(\hat{L})$ and there can be no essential torus within a product manifold, this implies that $\hat{L}$ is not fibred. Note that here we have used that $K_c$ is not the unknot. Also, $(V,L,\lambda)$ is not fibred in this case, as previously noted.
If instead $l\neq 0$ then $R_{\hat{L}}\cap T$ divides $R_{\hat{L}}$ into $|l|$ taut Seifert surfaces for $K_c$ (possibly with orientation reversed) and an oriented spanning surface for $(V,L,\lambda)$. If $\hat{L}$ is fibred then this oriented spanning surface demonstrates that the pattern $(V,L,\lambda)$ is also fibred.

We can also use this description to understand the genus of a satellite knot. More precisely, we see that 
\[
\chi(\hat{L})=|l|\chi(K_c)+\chi(V,L,\lambda),
\]
where $\chi(L')$ denotes the maximal Euler characteristic of a Seifert surface for a link $L'$.

Theorem 2 of \cite{zbMATH05545482} gives a means of testing whether the pattern $(V,L,\lambda)$ is fibred.
Let $V'$ be an unknotted solid torus in $\Sphere$. Let $\lambda'$ be a simple closed curve on the torus $\partial V'$ that bounds a disc in the complementary solid torus $\Sphere\setminus\Int(V')$.
Embed $V$ in $\Sphere$ by a homeomorphism $j\colon V\to V'$ that sends $\lambda$ to $\lambda'$.
Let $\mu$ and $\mu'$ be disjoint simple closed curves on $\partial V'$ that bound discs in $V$.
Denote by $L_3$ the link $j(L)\cup\mu\cup\mu'$, where $\mu$ and $\mu'$ are given opposite orientations.

\begin{theorem}[\cite{zbMATH05545482} Theorem 2]\label{addmeridiansthm}
The pattern $(V,L,\lambda)$ is fibred if and only if the link $L_3$ is fibred.
\end{theorem}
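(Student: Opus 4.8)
The plan is to split $\Sphere$ as $V'\cup_T W$, where $T=\partial V'$ and $W=\Sphere\setminus\Int(V')$ is the complementary (unknotted) solid torus, and to match the complementary sutured manifold of a Seifert surface for $L_3$ with the complementary sutured manifold in $V$ of a spanning surface for the pattern. Recall that $\lambda'$ is a meridian of $W$ while $\mu,\mu'$ are longitudes of $W$; since $\mu$ and $\mu'$ carry opposite orientations they cobound an annulus in $W$. By the preceding Lemma I may assume throughout that a spanning surface $R_\lambda$ meets $\partial V$ in exactly $|l|$ curves, all parallel to $\lambda$ and coherently oriented, and I use the fact (from the start of Section \ref{fibrednesssection}) that a non-split link is fibred if and only if the complementary sutured manifold to a taut Seifert surface is a product sutured manifold.

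For the forward direction, given a spanning surface $R_\lambda$ realising fibredness of the pattern, I would transport it by $j$ into $V'$, so that $j(R_\lambda)$ has boundary $j(L)$ together with $|l|$ parallel copies of $\lambda'$ on $T$. I then build a Seifert surface $\hat R=j(R_\lambda)\cup Q$ for $L_3$ by attaching a surface $Q\subseteq W$ whose boundary consists of $\mu$, $\mu'$ and those $|l|$ copies of $\lambda'$; such $Q$ exists because $[\mu]+[\mu']+|l|[\lambda']=0$ in $H_1(W)$, and it can be built from the annulus cobounded by $\mu\cup\mu'$ together with $|l|$ meridian discs of $W$ capping the $\lambda'$ curves. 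The key point is that $Q$ is the \emph{standard} spanning surface in $W$, so the part of the complementary sutured manifold $(M_{\hat R},s_{\hat R})$ lying inside $W$ is a product sutured manifold (a union of balls-with-one-suture and $I$-bundle pieces). The interface is governed by $T$: after removing neighbourhoods of $\hat R$ and $L_3$, the relevant pieces of $T$ are product annuli, and decomposing $(M_{\hat R},s_{\hat R})$ along them separates off these product pieces and leaves exactly the complementary sutured manifold $(M_V,s_V)$ of $R_\lambda$ in $V$. Applying Proposition \ref{productprop} repeatedly (it is an equivalence), $(M_{\hat R},s_{\hat R})$ is a product because $(M_V,s_V)$ is; hence $\hat R$ is a fibre surface and $L_3$ is fibred.

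For the reverse direction, suppose $L_3$ is fibred and choose a fibre surface $\hat R$. Because $L$ is neither contained in a ball in $V$ nor a core curve, the torus $T$ is essential in the exterior of $L_3$ (a meridian disc of $W$ meets $\mu$, and a meridian disc of $V'$ meets $j(L)$ essentially, algebraically $l$ times and geometrically nontrivially), so $\hat R$ may be isotoped to meet $T$ minimally; a homology count then forces $\hat R\cap T$ to consist of $|l|$ coherently oriented copies of $\lambda'$. Setting $R_\lambda=j^{-1}(\hat R\cap V')$ gives a spanning surface for $(V,L,\lambda)$ with $|R_\lambda\cap\partial V|=|l|$, and the same product-annulus decomposition of $(M_{\hat R},s_{\hat R})$ exhibits $(M_V,s_V)$ as a decomposition component of the product sutured manifold $(M_{\hat R},s_{\hat R})$; by Proposition \ref{productprop} this component is again a product, so $R_\lambda$ witnesses fibredness of the pattern. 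The degenerate case $l=0$ is dispatched separately: there the pattern is never fibred (the complementary sutured manifold has disconnected boundary, as already observed), and the same intersection analysis forces $R_+$ or $R_-$ in $(M_{\hat R},s_{\hat R})$ to be disconnected, so $L_3$ is not fibred either.

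I expect the main obstacle to be the bookkeeping at the interface $T$: choosing $Q\subseteq W$ so that the $W$-side of $(M_{\hat R},s_{\hat R})$ is genuinely a product, and checking orientations so that the pieces of $T$ are \emph{product} annuli (each with one boundary circle in $R_+$ and the other in $R_-$) rather than annuli with both ends on the same side. This is precisely the orientation- and suture-tracking already carried out in Corollary \ref{decomposeblockscor} and Lemma \ref{wordmanifoldlemma}, and it is where the hypothesis that $\mu$ and $\mu'$ receive \emph{opposite} orientations, so that they cobound an annulus, becomes essential. A secondary difficulty is justifying the normal form $\hat R\cap T=|l|\cdot\lambda'$ in the reverse direction, which rests on the incompressibility of $T$ and standard innermost-disc and minimal-position arguments.
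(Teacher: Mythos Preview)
The paper does not give its own proof of this theorem; it is quoted from \cite{zbMATH05545482}, where it is proved via fundamental groups. What the paper does provide, in the paragraphs immediately following the statement, is an informal sutured-manifold discussion: push $\mu,\mu'$ into $W=\Sphere\setminus\Int(V')$, choose a taut Seifert surface $R_3$ for $L_3$ minimising $|R_3\cap\partial V'|$, observe that $R_3\cap\partial V'$ consists of $|l|$ copies of $\lambda'$, and note that $R_3$ is then cut by $\partial V'$ into a spanning surface $R_\lambda$ for the pattern and a surface $R_{\mathbb A}$ in $W$ which may be taken to be the double curve sum of $\mathbb A$ with $|l|$ meridian discs, whose complementary sutured manifold in $W$ is a product if and only if $l\neq 0$.

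Your proposal is exactly this sutured-manifold argument, fleshed out with the missing mechanism: you make explicit that the annular pieces of $T$ left after removing $\hat R$ are \emph{product} annuli, so that Proposition~\ref{productprop} transports the product property across the interface in both directions. That is the step the paper's discussion leaves implicit. Your treatment of the $l=0$ case also matches the paper's earlier remark that the pattern cannot be fibred when $l=0$. So your approach is correct and coincides with the sutured-manifold route the paper alludes to (and contrasts with the fundamental-group proof in the cited source). The only point deserving a little more care is the assertion, in the reverse direction, that a taut $\hat R$ in minimal position meets $T$ in exactly $|l|$ coherently oriented copies of $\lambda'$: this uses incompressibility of $T$ together with the observation that no curve of $\hat R\cap T$ can be a meridian of $V'$ (it would bound in $W$) or inessential (innermost-disc), leaving only $\lambda'$-parallel curves whose signed count is $l$; ruling out oppositely oriented pairs is then the standard annulus-swap that reduces intersection number.
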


Again, by considering sutured manifolds we gain more precise information. Push the two parallel link components $\mu$ and $\mu'$ off the torus $\partial V'$ into the solid torus $\Sphere\setminus\Int(V')$, of which each is a core curve. Together $\mu$ and $\mu'$ span an annulus $\mathbb{A}$ within this solid torus. Choose a taut Seifert surface $R_3$ for $L_3$ with minimal intersection with $\partial V'$. Then $|R_3\cap\partial V'|=|l|$ and each curve of $R_3\cap\partial V'$ bounds a meridian disc of $\Sphere\setminus\Int(V')$. Moreover, $R_3$ is divided by $\partial V'$ into two sections, one an oriented spanning surface $R_{\lambda}$ for $(V,L,\lambda)$ and the second an oriented spanning surface $R_{\mathbb{A}}$ for the pattern $(\Sphere\setminus\Int(V'),\partial \mathbb{A},\lambda)$.
The Seifert surface $R_3$ can be chosen such that $R_{\mathbb{A}}$ is the double curve sum of the annulus $\mathbb{A}$ and $|l|$ meridian discs for $\Sphere\setminus\Int(V')$ (punctured by $\partial \mathbb{A}$). The complementary sutured surface to $R_{\mathbb{A}}$ in $\Sphere\setminus\Int(V')$ is then fibred if and only if $l\neq 0$.
In addition, we see that 
\[
\chi(L_3)=\chi(R_3)=\chi(R_{\lambda})+\chi(R_{\mathbb{A}})=\chi(V,L,\lambda)-|l|.
\]\

In \cite{zbMATH05820036}, Hirasawa and Murasugi consider satellite knots where the pattern is a torti-rational knot. Any such pattern can be constructed as follows. Choose a two-component two-bridge link. A neighbourhood of one component is an unknotted solid torus in $\Sphere$, with the second component lying in the complementary solid torus. Take $V$ to be this second solid torus, and $L$ the second component of the two-bridge link. In addition, take $\lambda$ to be a longitude on $\partial V$ that \emph{does not} bound a disc in the complement of $V$.

For a satellite knot $\hat{K}$ with a pattern of this form and a fibred companion knot, Hirasawa and Murasugi give the genus of $\hat{K}$ and when it is fibred, in terms of the coefficients of a particular continued fraction expansion for the original two-bridge link and the choice of longitude $\lambda$. When $l\neq 0$ these conditions are also related to the Alexander polynomial as follows.

\begin{theorem}[\cite{zbMATH05820036} Theorem 13.1]\label{nonzerowindingthm}
Assume $l\neq 0$. The genus of $\hat{K}$ is half the degree of the Alexander polynomial of $\hat{K}$. In addition, $\hat{K}$ is fibred if and only if the Alexander polynomial is monic.
\end{theorem}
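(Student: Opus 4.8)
The plan is to deduce Theorem \ref{nonzerowindingthm} by combining the multiplicativity of the Alexander polynomial under the satellite construction with the Euler-characteristic and fibredness data assembled above. Write $\Delta_{K}$ for the Alexander polynomial, normalised up to units $\pm t^{k}$, let $\deg$ denote its breadth (the difference between the top and bottom exponents, so that for a fibred knot $\deg\Delta=2g$), and let $\Delta_{(V,L,\lambda)}(t)$ be the one-variable Alexander polynomial of the pattern computed in $V$, with the meridian of $V$ as variable. The classical satellite formula of Seifert (see also \cite{MR808776}) gives
\[
\Delta_{\hat{K}}(t)\doteq\Delta_{K_c}(t^{l})\cdot\Delta_{(V,L,\lambda)}(t),
\]
so both the breadth and the leading coefficient of $\Delta_{\hat{K}}$ split as a companion contribution and a pattern contribution. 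Since $\deg\bigl(\Delta_{K_c}(t^{l})\bigr)=|l|\deg\Delta_{K_c}$ and the leading coefficient of $\Delta_{K_c}(t^{l})$ equals that of $\Delta_{K_c}$, the two assertions reduce to the corresponding statements for the companion and for the pattern separately.

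First I would dispose of the companion factor. As $K_c$ is a fibred knot, $\Delta_{K_c}$ is monic with $\deg\Delta_{K_c}=2g(K_c)=1-\chi(K_c)$. Substituting this and using the additivity $\chi(\hat{L})=|l|\chi(K_c)+\chi(V,L,\lambda)$ proved earlier in this section (whence $2g(\hat{K})=1-\chi(\hat{K})=1-|l|\chi(K_c)-\chi(V,L,\lambda)$), the genus claim $\deg\Delta_{\hat{K}}=2g(\hat{K})$ becomes the purely pattern-level identity
\[
\deg\Delta_{(V,L,\lambda)}=1-|l|-\chi(V,L,\lambda),
\]
and the fibredness claim becomes: $\Delta_{(V,L,\lambda)}$ is monic if and only if $(V,L,\lambda)$ is fibred. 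The latter suffices because, $K_c$ being fibred, $\hat{K}$ is fibred exactly when the pattern is, by Theorem 1 of \cite{zbMATH05545482}, and because the leading coefficient of a product of Laurent polynomials with non-zero integer extreme coefficients is a unit precisely when each factor's is.

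The heart of the matter is therefore the pattern. Here I would pass to the three-bridge link $L_3$ through Theorem \ref{addmeridiansthm} and the discussion following it, which gives $\chi(L_3)=\chi(V,L,\lambda)-|l|$ together with the equivalence ``$(V,L,\lambda)$ fibred $\iff L_3$ fibred''. Using $\chi(L_3)=1-2N_+$ from Remark \ref{genusremark} (legitimate because $R$ is taut, by Proposition \ref{tautnessproofprop}), the required breadth identity rearranges to $\deg\Delta_{(V,L,\lambda)}=2(N_+-|l|)$, while the monic criterion is, by Corollary \ref{whenfibredcor}, the continued-fraction parity condition on $[a_1,\dots,a_N]$. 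Both can in principle be read off a Seifert matrix of the explicit surface $R$ of Section \ref{algorithmsection}: the decomposition of $R$ into the $N_+$ tubes and unpaired $\mathcal{A}$-blocks renders the Seifert form block-triangular, so the extreme coefficients of $\det(tV-V^{T})$ are products over the blocks and are computed directly (alternatively one invokes the explicit two-bridge Alexander data underlying \cite{MR808776} Proposition 12.24).

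The main obstacle will be the breadth identity for \emph{non-fibred} patterns, that is, ruling out cancellation in the extreme coefficients of $\Delta_{(V,L,\lambda)}$. For fibred patterns this is free: fibredness forces monicity, so the extreme coefficients are units and in particular non-zero, the breadth is maximal, and it equals $2g$. For non-fibred patterns one must show the leading coefficient is still non-zero even though it need not be a unit, and this is exactly where the block-triangular Seifert-matrix computation is required, and where the restriction to the present class of links (rather than arbitrary two-bridge data) is used. Once the leading coefficient is shown to be non-zero in every case and equal to $\pm1$ precisely under the parity condition of Corollary \ref{whenfibredcor}, both halves of the theorem follow from the two reductions above.
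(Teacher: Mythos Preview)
The paper does not prove this theorem: it is quoted verbatim from \cite{zbMATH05820036} as Theorem~13.1 of Hirasawa--Murasugi, and no proof is supplied here. The paper's own contributions (Proposition~\ref{tautnessproofprop}, Corollary~\ref{whenfibredcor}, Remark~\ref{genusremark}) concern the \emph{complementary} case, namely the pattern $(V,L_{\mathbb D},\lambda_{\mathbb D})$ with $\lambda_{\mathbb D}$ bounding a disc in $\Sphere\setminus V$, which is precisely the case Hirasawa--Murasugi do \emph{not} treat. So there is no ``paper's own proof'' to compare against.

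Your proposal therefore attempts something the paper never claims to do, and in the process conflates the two settings. When you invoke Remark~\ref{genusremark}, Proposition~\ref{tautnessproofprop} and Corollary~\ref{whenfibredcor}, you are importing Euler characteristic and fibredness statements about the specific surface $R$ and link $L_3$ built in Section~\ref{algorithmsection} --- objects tied to the framing $\lambda_{\mathbb D}$ --- into an argument about the Hirasawa--Murasugi patterns, which use a different longitude. The numbers $N_+,N_-$ and the identity $\chi(R)=1-2N_+$ are not available for the general torti-rational pattern with $l\neq 0$; they are artefacts of the construction in this paper.

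Even setting that aside, your sketch does not close. You correctly isolate the crux as showing that the extreme coefficients of $\Delta_{(V,L,\lambda)}$ are non-zero in the non-fibred case, but you then only gesture at a ``block-triangular Seifert matrix'' computation without carrying it out or explaining why the blocks have non-vanishing determinant. That step is exactly the substance of Hirasawa--Murasugi's argument, and it is not something that falls out of the machinery assembled here.
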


In \cite{zbMATH05820036} Remark 13.3, the authors mention that they do not address the case where $\lambda$ is chosen to bound a disc in the complement of $V$. By combining Proposition \ref{tautnessproofprop} and Corollary \ref{whenfibredcor} with Theorem \ref{addmeridiansthm}, we can now complete the picture by considering this case.

Let $(V,L_{\mathbb{D}},\lambda_{\mathbb{D}})$ be the pattern produced from the two-component two-bridge link $L_2$, taking $\lambda_{\mathbb{D}}$ to bound a disc in the complement of $V$. 
When viewed within $\Sphere$, the knot $L_{\mathbb{D}}$ is the unknot. Thus the Alexander polynomial of a satellite knot $\hat{K}_{\mathbb{D}}$ constructed with this pattern is $\Delta_{K_c}(t^l)$, where $l$ is the winding number of the pattern and $\Delta_{K_c}(t)$ is the Alexander polynomial of the companion knot $K_c$ (see, for example, \cite{MR1472978} Theorem 6.15). Assuming that $K_c$ is fibred, the Alexander polynomial  of the satellite will therefore always be monic. On the other hand we will see that the satellite knot is sometimes fibred and sometimes not. We will also see that the part of Theorem \ref{nonzerowindingthm} relating to genus does not hold in this situation.

By Theorem \ref{addmeridiansthm}, $(V,L_{\mathbb{D}},\lambda_{\mathbb{D}})$ is fibred if and only if the three-bridge link $L_3$ formed in Section \ref{algorithmsection} is fibred, where $L_3$ is constructed by giving the annulus $\mathbb{A}$ the blackboard framing (that is, by creating the diagram of $L_3$ with no crossings between $L_+$ and $L_-$). 
From this we immediately have the following corollary of Corollary \ref{whenfibredcor}.

\begin{corollary}
A satellite knot with pattern $(V,L_{\mathbb{D}},\lambda_{\mathbb{D}})$ as described and fibred companion knot is fibred if and only if, in the continued fraction $[a_1,a_2,a_3,\ldots,a_N]$ of the two-bridge link $L_2$, either $N=1$ and $a_1$ is even, or $N>1$ and both $a_1$ and $a_N$ are odd while $a_i$ is even for $1<i<N$.
\end{corollary}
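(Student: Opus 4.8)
The plan is to obtain this corollary by chaining together the three fibredness criteria that have already been established, since the statement is a direct consequence of them rather than something requiring a fresh argument. First I would invoke Theorem~1 of \cite{zbMATH05545482}, which asserts that the satellite $\hat{K}_{\mathbb{D}}$ is fibred if and only if both its companion $K_c$ and its pattern $(V,L_{\mathbb{D}},\lambda_{\mathbb{D}})$ are fibred. Since the companion is fibred by hypothesis, this immediately reduces the question of when $\hat{K}_{\mathbb{D}}$ is fibred to the question of when the pattern $(V,L_{\mathbb{D}},\lambda_{\mathbb{D}})$ is fibred.

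Next I would apply Theorem~\ref{addmeridiansthm}, combined with the identification set up in the discussion preceding this corollary: the pattern $(V,L_{\mathbb{D}},\lambda_{\mathbb{D}})$ is fibred if and only if the three-bridge link $L_3$ of Section~\ref{algorithmsection}, built by giving the annulus $\mathbb{A}$ the blackboard framing, is fibred. The point that requires care here is framing-independence. The word $\mathcal{W}$ is read off the diagram of $L_2$ \emph{before} doubling, so it does not depend on the framing; changing the framing only alters the number of crossings between $L_+$ and $L_-$, and by the construction in Section~\ref{algorithmsection} these crossings are all placed under the infinity letter. By Proposition~\ref{discembeddedprop} the fibredness criterion requires only that $\mathcal{W}$ contain no $\mathcal{O}$ and that every $\mathcal{E}$ \emph{other than the infinity letter} carry no twists, so the twist count on the infinity letter — the only framing-sensitive datum — plays no role. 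Hence the blackboard-framed $L_3$ produced in Theorem~\ref{addmeridiansthm} is a legitimate instance of the link to which Corollary~\ref{whenfibredcor} applies, and its fibredness is governed by exactly the same condition as for any other framing.

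Finally I would quote Corollary~\ref{whenfibredcor}, which translates the fibredness of $L_3$ into the stated arithmetic condition on the continued fraction $[a_1,\ldots,a_N]$ of $L_2$: either $N=1$ with $a_1$ even, or $N>1$ with both $a_1$ and $a_N$ odd and every intermediate $a_i$ even. Composing the three equivalences — satellite fibred iff pattern fibred, pattern fibred iff $L_3$ fibred, $L_3$ fibred iff the continued-fraction condition — yields the corollary.

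Because each step is a citation of an equivalence already proved, there is no genuinely difficult analytic step. I expect the only real obstacle to be expository bookkeeping, namely confirming that the link $L_3=j(L)\cup\mu\cup\mu'$ arising in the hypotheses of Theorem~\ref{addmeridiansthm} is indeed the link $L_3$ constructed from $L_2$ in Section~\ref{algorithmsection}, and spelling out that its fibredness is insensitive to the blackboard choice. Both facts are essentially arranged in the surrounding discussion, so the obstacle is one of careful referencing rather than mathematics.
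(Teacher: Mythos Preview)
Your proposal is correct and follows exactly the same route as the paper, which simply states that the result is an immediate consequence of Corollary~\ref{whenfibredcor} together with the preceding discussion (Theorem~1 of \cite{zbMATH05545482} and Theorem~\ref{addmeridiansthm}). If anything, your treatment of the framing-independence point is more careful than the paper's, which just asserts the blackboard framing is the relevant one and moves on.
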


For $l\neq 0$, Hirasawa and Murasugi find the genus of each knot of interest by finding a Seifert surface with a given genus, and showing that this matches the lower bound on genus given by the degree of the Alexander polynomial. When $l=0$, this lower bound is not sufficient, and they use sutured manifold decompositions to prove that the surface constructed is taut, as we have done in this paper.
Unsurprisingly, the surface constructions we have used are very similar to those in \cite{zbMATH05820036}. The disc $\mathbb{D}$ is the same as the `primitive spanning disc' in \cite{zbMATH05820036} Section $3.5$. Moreover, Lemma \ref{pairinglemma} is essentially the same as the induction in \cite{zbMATH05820036}  Section 7, and the surface we arrive at in the middle of Section \ref{algorithmsection} Step \ref{tubingstep} is the `canonical surface' they construct. Note that this surface intersects a neighbourhood of $L_{\mathbb{A}}$ in $|l|$ meridian discs. In the second half of Section \ref{algorithmsection} Step \ref{tubingstep}, we take the double curve sum of this surface with the annulus $\mathbb{A}$, to arrive at the surface $R_3$ for $L_3$. 

Looking again at Euler characteristic, under the assumption $l\geq 0$ and given Remark \ref{genusremark}, we find that 
\[
\chi(V,L_{\mathbb{D}},\lambda_{\mathbb{D}})=\chi(R_3)+|l|=1-2N_+ +l=1-2N_+ +(N_+-N_-)=1-N_+-N_-.
\]
Here $N_+$ and $N_-$ are as in Section \ref{algorithmsection}, and we have used that $l=N_+-N_-$. Hence if $\hat{L}$ is a satellite with pattern $(V,L_{\mathbb{D}},\lambda_{\mathbb{D}})$ and companion $K_c$ then 
\[
\chi(\hat{L})=\chi(V,L_{\mathbb{D}},\lambda_{\mathbb{D}})+l\chi(K_c)=1-(N_++N_-)+(N_+-N_-)\chi(K_c).
\]

We will now finish with two examples to contrast with Theorem \ref{nonzerowindingthm}. Take $K_c$ to be the trefoil, which is fibred. Consider the two two-bridge links $L_{\alpha}$ and $L_{\beta}$ shown in Figure \ref{satellitepic2}a and Figure \ref{satellitepic2}b respectively.
Using these as described above to form satellites with companion $K_c$, we arrive respectively at the knots $\hat{L}_{\alpha}$ and $\hat{L}_{\beta}$ in Figure \ref{satellitepic3}.
\begin{figure}[htbp]
\centering
(a)
\input{picturefiles/satellitepic2a.tex}
(b)
\input{picturefiles/satellitepic2b.tex}
\caption{\label{satellitepic2}}
\end{figure}
\begin{figure}[htbp]
\centering
(a)
\input{picturefiles/satellitepic3a.tex}
(b)
\input{picturefiles/satellitepic3b.tex}
\caption{\label{satellitepic3}}
\end{figure}

The Alexander polynomial of both of these knots is $t^4-t^2+1$.  
However, we have shown that $\hat{L}_{\alpha}$ is fibred whereas $\hat{L}_{\beta}$ is not. Similarly, $\chi(\hat{L}_{\alpha})=1-(2+0)+(2-0)\chi(K_c)=-3$ so $g(\hat{L}_{\alpha})=2$, whereas $\chi(\hat{L}_{\beta})=1-(3+1)+(3-1)\chi(K_c)=-5$ so $g(\hat{L}_{\beta})=3$. Thus the Alexander polynomial is unable to detect either genus or fibredness in these cases.

\appendix
\section{The general case}

In Section \ref{algorithmsection}, we gave the construction of the Seifert surface $R$ for the three-bridge link $L_3$ under the assumption that the word $\mathcal{W}$ constructed contains no letters $\mathcal{B}$, $\mathcal{C}$ or $\mathcal{D}$. The same is true of the proofs of Proposition \ref{tautnessproofprop} in Section \ref{tautnesssection} and Propositions \ref{discembeddedprop} and \ref{isfibredprop} in Section \ref{fibrednesssection}. Here we give the additional details needed for the proofs without this condition. As previously mentioned, we note that no new ideas are needed, there are only more cases to check.

\paragraph{Section \ref{algorithmsection} Step \ref{pairingstep}}
Pairing of the letters in $\mathcal{W}$ should be performed as if each $\mathcal{B}$, $\mathcal{C}$ or $\mathcal{D}$ were an $\mathcal{A}$.

\paragraph{Section \ref{algorithmsection} Step \ref{tubingstep}}
To create the Seifert surface $R$ from the disc $\mathbb{D}$ and the annulus $\mathbb{A}$, tube together two sections of $\mathbb{D}$ for each pair of letters as in Section \ref{algorithmsection}.
This leaves one arc of intersection for each unpaired $\mathcal{A}$, $\mathcal{B}$, $\mathcal{C}$ or $\mathcal{D}$.
For each unpaired $\mathcal{A}$, make the change shown in Figure \ref{algorithmpic10}, as before. The analogous pictures for an unpaired $\mathcal{B}$, $\mathcal{C}$ and $\mathcal{D}$ are shown in Figures \ref{appendixpic1}, \ref{appendixpic2} and \ref{appendixpic3} respectively.
\begin{figure}[htbp]
\centering
(a)
\input{picturefiles/appendixpic1a.tex}
(b)
\input{picturefiles/appendixpic1b.tex}
\caption{\label{appendixpic1}}
\end{figure}
\begin{figure}[htbp]
\centering
(a)
\input{picturefiles/appendixpic2a.tex}
(b)
\input{picturefiles/appendixpic2b.tex}
\caption{\label{appendixpic2}}
\end{figure}
\begin{figure}[htbp]
\centering
(a)
\input{picturefiles/appendixpic3a.tex}
(b)
\input{picturefiles/appendixpic3b.tex}
\caption{\label{appendixpic3}}
\end{figure}

\paragraph{Proposition \ref{tautnessproofprop} Step \ref{decomposeunpairedstep}}
The sutured manifold decomposition of a section of the sutured manifold $(M_R,s_R)$ coming from an unpaired $\mathcal{A}$ is shown in Figure \ref{tautnesspic1}. The resulting section of $(M_R',s_R')$ corresponds to the string $\alpha a$ in the language of Lemma \ref{wordmanifoldlemma}.
The decomposition in the case of an unpaired $\mathcal{B}$ is symmetric, and results in a section of $(M_R',s_R')$ that corresponds to the string $\gamma c$ (with $k=m=0$ and $l=n=1$).
Figure \ref{appendixpic4} is the analogue to Figure \ref{tautnesspic1} in the case of an unpaired $\mathcal{C}$. The resulting section of $(M_R',s_R')$ corresponds to the string $\alpha c$. The case for a $\mathcal{D}$ is symmetric to this, and the corresponding string is $\gamma a$.
\begin{figure}[htbp]
\centering
(a)
\input{picturefiles/appendixpic4a.tex}
(b)
\input{picturefiles/appendixpic4b.tex}
\caption{\label{appendixpic4}}
\end{figure}

\paragraph{Proposition \ref{tautnessproofprop} Step \ref{decomposetubesstep}}
When we consider the sutured manifold decompositions at the ends of the tubes in $R$ coming from paired letters in $\mathcal{W}$, the addition of the letters $\mathcal{B}$, $\mathcal{C}$ and $\mathcal{D}$ introduces many new cases to be considered, as now the left-hand and right-hand ends of a tube can each correspond to one of the four letters. Rather than work through all these cases, we will work separately with the left-hand and right-hand ends of each tube. We can do this because the only interaction between the two ends is in the choice of orientation on the annuli we are decomposing along; we will make the same choices as when each end corresponds to an $\mathcal{A}$.

As before, we must consider separately the situations for a tube with a maximal joining word and for a tube with a joining word that is not maximal. First assume the joining word is maximal.

At the left-hand end of the tube, for a letter $\mathcal{A}$, the section of $(M_R',s_R')$ we see (after the decomposition) corresponds to the string $\alpha a$, either with $k=l=m=0$ and $n=1$ or with $k=l=0$ and $m=n=1$. The case for a $\mathcal{B}$ is symmetric, and gives the string $\gamma c$, again with $k=l=0$, $n=1$ and $m\in\{0,1\}$.
Figures \ref{appendixpic5}a and \ref{appendixpic6}a show the two decompositions in the case of a letter $\mathcal{C}$. The resulting section of $(M_R',s_R')$ corresponds to the string $\alpha c$
with $k=l=0$, $n=1$ and $m\in\{0,1\}$.
The situation for a $\mathcal{D}$ is symmetric to this, and the result corresponds to the string $\gamma a$
with $k=l=0$, $n=1$ and $m\in\{0,1\}$.
\begin{figure}[htbp]
\centering
(a)
\input{picturefiles/appendixpic5a.tex}
(b)
\input{picturefiles/appendixpic5b.tex}
\caption{\label{appendixpic5}}
\end{figure}
\begin{figure}[htbp]
\centering
(a)
\input{picturefiles/appendixpic6a.tex}
(b)
\input{picturefiles/appendixpic6b.tex}
\caption{\label{appendixpic6}}
\end{figure}

At the right-hand end of the tube, a letter $\mathcal{A}$ again gives a section of $(M_R',s_R')$ corresponding to the string $\alpha a$ (now with $m=n=0$, $l=1$ and $k\in\{0,1\}$). As before, the case of a letter $\mathcal{B}$ is symmetric, and the corresponding string is $\gamma c$. The two decompositions in the case of a $\mathcal{C}$ are shown in Figures \ref{appendixpic5}b and \ref{appendixpic6}b. The corresponding string is, again, $\alpha c$. By symmetry, a $\mathcal{D}$ yields the string $\gamma a$.

Now suppose instead that the joining word is not maximal. The analogues of Figures \ref{appendixpic5} and \ref{appendixpic6} in this case are Figures \ref{appendixpic7} and \ref{appendixpic8} respectively. 
Once again we find that the letters $\mathcal{A}$, $\mathcal{B}$, $\mathcal{C}$ and $\mathcal{D}$ yield the strings $\alpha a$, $\gamma c$, $\alpha c$ and $\gamma a$ respectively, with varying values of $k$, $l$, $m$ and $n$.
\begin{figure}[htbp]
\centering
(a)
\input{picturefiles/appendixpic7a.tex}
(b)
\input{picturefiles/appendixpic7b.tex}
\caption{\label{appendixpic7}}
\end{figure}
\begin{figure}[htbp]
\centering
(a)
\input{picturefiles/appendixpic8a.tex}
(b)
\input{picturefiles/appendixpic8b.tex}
\caption{\label{appendixpic8}}
\end{figure}

\paragraph{Proposition \ref{tautnessproofprop} Step \ref{wholewordstep}}
Between the sections of $(M_R',s_R')$ coming from the instances of the letters $\mathcal{A}$, $\mathcal{B}$, $\mathcal{C}$ and $\mathcal{D}$ in $\mathcal{W}$ there are sections coming from instances of the letters $\mathcal{O}$ and $\mathcal{E}$. The infinity letter gives a section corresponding to a letter $C$ in the language of Lemma \ref{wordmanifoldlemma}. An $\mathcal{O}$ that is not the infinity letter yields either an $A$ or a $D$, and an $\mathcal{E}$ that is not the infinity letter yields either a $B$ or an $E$. Thus, as before, $(M_R',s_R')$ satisfies the hypotheses of Lemma \ref{wordmanifoldlemma} and, as $w_R$ contains a letter $C$, is taut.

\paragraph{Proposition \ref{discembeddedprop}}
Suppose that the word $\mathcal{W}$ contains a letter $\mathcal{O}$. We wish to show that $(M_R',s_R')$ is not a product sutured manifold, which we do by demonstrating that at least one of $R_+$ and $R_-$ is disconnected. To do this, we consider the pieces of $(M_R',s_R')$ coming from the endpoints of a tube in $R$ corresponding to a maximal joining word.
Recall that, in Section \ref{tautnesssection}, there were two ways we decomposed $(M_R,s_R)$ given such a tube (shown in Figures \ref{tautnesspic4} and \ref{tautnesspic5}). The two cases are given by whether the left-hand or the right-hand end of the tube lies on a piece of the disc $\mathbb{D}$ that is oriented upwards (Figure \ref{tautnesspic4}) or downwards (Figure \ref{tautnesspic5}). Recall that the annulus $\mathbb{A}$ is always oriented downwards, whereas the two endpoints of the tube lie on two pieces of $\mathbb{D}$ with opposite orientations. Equivalently, we can ask whether, of the paired $+\mathcal{A}+$ and $-\mathcal{A}-$ in $\mathcal{W}$, the left-hand or the right-hand end of the tube corresponds to the $-\mathcal{A}-$. After decomposition, each end gives a section of $(M_R',s_R')$ corresponding to the string $\alpha a$ in the language of Lemma \ref{wordmanifoldlemma}. From Figure \ref{tautnesspic4} we see that if the $-\mathcal{A}-$ is at the right-hand end of the tube then the corresponding $a$ piece of $(M_R',s_R')$ has $k=l=1$. On the other hand, if the $-\mathcal{A}-$ is at the left-hand end of the tube then the corresponding $\alpha$ piece has $m=n=1$.
By symmetry we know that if the right-hand end of the tube corresponds to a $-\mathcal{B}-$ then the piece of $(M_R',s_R')$ there corresponds to the string $\gamma c$ with $k=l=1$, whereas if the left-hand end corresponds to $-\mathcal{B}-$ then we get the string $\gamma c$ with $m=n=1$. Rephrasing this observation, when considering the letters $\mathcal{A}$ and $\mathcal{B}$, if the right-hand end of the tube lies on a piece of $\mathbb{D}$ that is oriented downwards then we have an $a$ or $c$ in $w_R$ with $k=l=1$, whereas if it is the left-hand end of the tube that does so then we have an $\alpha$ or $\gamma$ in $w_R$ with $m=n=1$.
From Figures \ref{appendixpic5} and \ref{appendixpic6} we see that the same is true when considering the letters $\mathcal{C}$ and $\mathcal{D}$.

Return now to considering the tube with a maximal joining word that we have chosen. Without loss of generality, we may assume that the right-hand end of the tube lies on a piece of $\mathbb{D}$ that is oriented downwards. Then we find that $w_R$ contains a letter $a$ or $c$ with $k=l=1$. The next letter in $w_R$ is in $\{A,B,C,D,E\}$ with $n=1$. Following this is a letter $\alpha$ or $\gamma$ with $n=1$. As before, by isotoping one suture we can arrange that the $a$ or $c$ instead has $k=2$ and $l=0$. This shows that either $R_+$ or $R_-$ is disconnected.
Thus we see that $\mathcal{W}$ does not contain a letter $\mathcal{O}$.

Next suppose that $\mathcal{W}$ contains a letter $\mathcal{E}$ that is not the infinity letter and represents at least two twists. This letter $\mathcal{E}$ corresponds to a letter $B$ or $E$ in $w_R$; by symmetry we may assume it is a $B$. This $B$ is preceded in $w_R$ by an $a$ and followed by an $\alpha$. From here the argument proceeds as before.

\paragraph{Proposition \ref{isfibredprop}}
As before, we will show that $(M_R,s_R)$ is a product sutured manifold by checking that all decompositions used in the proof of Proposition \ref{tautnessproofprop} in this case are along product discs, until we reach a manifold that is clearly a product sutured manifold.

From Figures \ref{tautnesspic1} and \ref{appendixpic4} we see that every decomposition from $(M_R,s_R)$ to $(M_R',s_R')$ is along a product disc. Therefore it suffices to show that $(M_R',s_R')$ is a product sutured manifold. We will do this by induction on the length of the word $w_R$. Note that $w_R$ contains no letter $b$, $d$, $A$, $D$, $F$, $\beta$ or $\delta$, and it contains exactly one $C$. It follows that $w_R$ can be broken down into strings in $\{aB\alpha, aC\alpha, cE\gamma\}$. 

First suppose that $w_R$ has length $3$. Then $w_R$ is $aC\alpha$. We have already seen that this is a product sutured manifold.
Now suppose that $w_R$ has length at least $6$. Then it contains at least one of the other two strings. As before, if $aA\alpha$ is a subword of $w_R$ then we can perform a sutured manifold decomposition along a product disc that has the effect of deleting this subword from $w_R$. By symmetry, the same is true if $cE\gamma$ is a subword of $w_R$. Hence, by induction, we find that $(M_R',s_R')$ is a product sutured manifold.

\bibliography{threebridgerefs}
\bibliographystyle{hplain}

\bigskip
\noindent
Department of Physics and Mathematics

\noindent
University of Hull

\noindent
Hull, HU6 7RX, UK 

\noindent
\textit{jessica.banks[at]lmh.oxon.org}

\end{document}